\documentclass[a4paper,12pt]{article}
\usepackage{cmap}
\usepackage[T1]{fontenc}
\usepackage[utf8]{inputenc}
\usepackage{amsmath,amsfonts,amssymb,amsthm}
\usepackage{tikz,graphicx,pgfplots}
\usepackage{bm}
\usepackage{float}
\setlength{\parskip}{0.25em}

\allowdisplaybreaks
\usepackage{mathtools}
\mathtoolsset{showonlyrefs}

\usepackage{secdot}


\usepackage[labelsep=period]{caption}

\usepackage{enumitem}
\setlist{nosep}

\numberwithin{equation}{section}
\newtheorem{lemma}{Lemma}[section]
\newtheorem{remark}{Remark}[section]
\newtheorem{theorem}{Theorem}[section]
\newtheorem{corollary}{Corollary}[section]
\let\OLDthebibliography\thebibliography
\renewcommand\thebibliography[1]{
	\OLDthebibliography{#1}
	\setlength{\parskip}{1pt}
	\setlength{\itemsep}{1pt plus 0.3ex}
}

\usepackage[colorlinks=true,linktocpage,pdfpagelabels,
bookmarksnumbered,bookmarksopen]{hyperref}
\definecolor{ForestGreen}{rgb}{0.1,0.6,0.05}
\definecolor{EgyptBlue}{rgb}{0.063,0.1,0.6}
\hypersetup{
	colorlinks=true,
	linkcolor=EgyptBlue,         
	citecolor=ForestGreen,
	urlcolor=olive
}

\usepackage{accents}

\textwidth=18cm 
\hoffset=-2cm

\def\R{\mathbb R}

 \title{Entire solutions to the Swift--Hohenberg equation\\ via variational 
approach}
 \author{S.B.~Kolonitskii\footnote{Saint-Petersburg State Electrotechnical 
University ``LETI'', Russia; email: {sbkolonitskii@etu.ru}}, 
\setcounter{footnote}{6}
 L.M.~Lerman\footnote{HSE University and N.I. Lobachevsky State University of 
Nizhny Novgorod, Russia; email: {lermanl@mm.unn.ru}. The work of this author 
is supported by the Ministry of Science and Higher Education of the Russian 
Federation, agreement 0729-2020-0036.}, 
\setcounter{footnote}{3}
 A.I.~Nazarov\footnote{PDMI RAS and St. Petersburg State University, Russia; 
email: al.il.nazarov@gmail.com}}
\date{}

\begin{document}
 \maketitle

\section{Introduction and set up}

The Swift--Hohenberg (SH) equation
\begin{equation}\label{MainEq}
u_t = \alpha u + \beta u^2 - u^3 - (\Delta+1)^2u
\end{equation}
($\Delta$ is the conventional spatial Laplacian)
models many phenomena in various branches of physics, chemistry and ecology
\cite{SH,TGM,VM,Knobloch,Meron}. Therefore its study is of
interest both for the applications and from the viewpoint of its mathematical interest as a semi-linear PDE having a rich set of solutions with different structures \cite{TGM,Doel,IR,spots,KLSh}. This equation is of the gradient type \cite{Fife}, therefore  solutions of the corresponding stationary equation
\begin{equation}
  \label{Swift--Hohenberg-equation}
  (\Delta + 1)^2 u - \alpha u - \beta u^2 + u^3=0
\end{equation}
are of the primary interest, they can be found by variational methods.

When we deal with some model PDE of the evolutionary type arising in physics, two main goals can be of interest. The first is a physical point of view: which temporary stable
solutions can this equation have, what is their spatial structure, different problems concerning asymptotic behavior of solutions, the bifurcation problems for solutions found. At this approach temporary unstable solutions are out of interest. The situation and
interests have changed with the development of the theory of differential equations and arising the theory of dynamical systems, when the main interest has shifted from the study of individual solutions to the structure of a system in the whole or within large domains. It appeared that unstable solutions also matter. 

The today's situation in PDEs studies on the physical level remind, in a sense, the old history in mechanics, when main interest was the study of stable solutions. Nevertheless, we understand, that from the future vision of the theory, it is very important to understand the behavior and structure for all solutions of the PDE given, how complicated can they be, what are the mathematical mechanisms responsible for their existence and changes of their structure, as parameters vary. This is the direction of research which we follow in this paper. We would like to understand what are the solutions of the stationary SH equation when it is considered on the whole $\R^n$. It is necessary to emphasize that a great deal of work was done to understand this problem at the various level of rigor. Since it is impossible to describe more or less completely all the work done here, we mention only surveys \cite{Knobloch,Knob,Hoyle,CG,Nep}, in which a rich lists of references can be found.

The equation \eqref{Swift--Hohenberg-equation} is considered as a representative model equation, being not very complicated as it possesses a variational structure, but sufficiently complicated to have many solutions with different patterns. This is why we try to study its solutions as complete as possible using the tools we possess. We emphasize from the very beginning that we approach to the problem purely mathematically without paying so far the attention to whether the solutions found are relevant for the physical problems where the equation was derived. We hope to return to this problem at the next iteration of the study. One needs to emphasize that the study of solutions in the whole space is a rather hard problem and not many methods were developed here, especially if one deals with equations of higher order (greater than two).

The spatially one-dimensional (1D) case of SH equation is relatively well-studied. The existence and a genesis of some stationary solutions (localized, periodic, almost periodic, others) are rigorously substantiated \cite{GL,BK,KLSh}. Namely, the homogeneous stationary solution of SH equation corresponds to the equilibrium at the origin for the related reversible Hamiltonian system in $\R^4$ obtained after introducing new variables, and the main bifurcation of this equilibrium is the Hamiltonian Hopf bifurcation \cite{Meer,Schmidt}. For instance, spatially localized solutions correspond to homoclinic orbits arising from the equilibrium at the origin for this Hamiltonian system. It is worth noting that in the 1D case there is a region in the parameter plane $(\alpha,\beta)$, for points from there no localized solutions exist, see \cite{BGL}.

The Hamiltonian Hopf bifurcation provides a local mechanism of the creation homoclinic orbits and therefore, for solutions to SH equation from the homogeneous state.
There are also nonlocal mechanisms of their creation, when variation of parameters leads to the appearance of tangent homoclinic orbits which then are destroyed and form pairs of transverse homoclinic orbits. This process is similar to an appearance of a periodic orbit from ``nothing'' through a tangent bifurcation. One more mechanism of creating homoclinic orbits is through a formation of symmetric heteroclinic connections with some saddle periodic orbit lying in the same level of the Hamiltonian, or with another saddle-focus, arising for parameters along some curve in the parameter plane corresponding to the appearance of the second saddle-focus in the same level \cite{heter,KLSh}.

The proof of the local creation of localized solutions uses the reversibility of the Hamiltonian system and results of \cite{IP}. For this two-parametric Hamiltonian system double imaginary eigenvalues of the equilibrium at the origin correspond to the axis $\alpha=0$ in the parameter plane $(\alpha,\beta)$. The type of bifurcation is determined by the sign of a certain coefficient $A$, which alters at the point $\beta=\beta^\star\equiv\sqrt{27/38}$. The structure of the system in a neighborhood of this point was investigated in \cite{GL} for the truncated (integrable) normal form of the sixth order. It was shown there that the passage from $A<0$ ($\beta<\beta^\star$), when no local homoclinic orbits arise, to the case $A>0$ ($\beta>\beta^\star$), when they do arise, is accompanied with the formation of two one-parameter families of heteroclinic orbits, going from the saddle-focus to a saddle periodic orbit lying both in the same level of the Hamiltonian.


In the full (non-integrable) system the formation of local homoclinic orbits of the saddle-focus is blurred and is accompanied by the formation first of four tangent heteroclinic orbits, and there is a layer in the parameter plane, where countably many bifurcations occur, and only after crossing this layer the homoclinic orbits appear. The numerical justification of this picture was confirmed in \cite{Champ} for a similar model.

The spatially multidimensonal case is less investigated. The equation \eqref{Swift--Hohenberg-equation} in $\R^n$ is invariant w.r.t. the action of the group $O(n)$ of rotations of the Euclidean space $\R^n$. For $n=2$, the existence of its radial (rotationally invariant) localized solutions is rather well studied \cite{KLSh,spots,Sand}. The numerical experiments also demonstrate the existence of non-radial localized stationary patterns to this equation \cite{spots,Sand} which are invariant w.r.t. some finite subgroup of $O(2)$. Till recently most investigations in the two-dimensional geometry rely either on finite-dimensional approximations or some plausible hypotheses whose validity is not yet proven with mathematical level of rigor \cite{ALBKS,KL}. In the case $\alpha > 0$, some rigorous results were obtained in \cite{BIS,IR,Iooss}, where the existence of quasipatterns was proven using methods close to those in KAM theory. All this makes other approaches necessary and desirable, since they may lead to other possible mechanisms of finding and constructing solutions.

SH equation with $n\ge 3$ is almost not explored. For other types of equations (systems) met in chemistry (reaction-diffusion systems), laser study etc{.} there are some numerical results \cite{deWit,Stalun,Tlidi} but as we aware of, no rigorous mathematical results are known.


In this paper we develop a variational approach which allows us to construct new classes of solutions to SH equation with $\alpha < 0$ in $\R^n$ for any $n \leq 7$ in a unified way.  

Variational methods for differential equations are well known. For instance, homoclinic and periodic solutions for the systems of second order ordinary differential equations periodic in $x$ were obtained in papers starting since the work by Rabinowitz \cite{Rabinowitz} and many others after that \cite{Rab1,Rab2,Rab3}. Important novel methods and results were obtained by Ser\'e \cite{Sere1,Sere2}. This approach was extended to systems of elliptic equations periodic w.r.t. independent variables \cite{Rab1,Rab3}.

Our version of variational approach based on symmetry considerations was developed in \cite{LNN}, where entire bounded solutions with various types of symmetries to the simplest semilinear elliptic equation 
\begin{equation}
 \label{simplest_equation}
 \Delta u - u + u^3=0  \quad \mbox{in} \quad \mathbb R^n, \quad n = 2, 3,
\end{equation}
were constructed using the concentration-compactness principle by P.-L. Lions. Moreover, similar results were obtained in \cite{LNN} for the quasilinear equation
\begin{equation}
\Delta_pu - |u|^{p-2}u + |u|^{q-2}u = 0 \quad \mbox{in} \quad \mathbb R^n, \quad 1 < p < \infty,
\label{GenerEq}
\end{equation}
driven by $p$-Laplacian $\Delta_p u \equiv \mathrm{div}(|\nabla u|^{p-2}u)$, with a superlinear and subcritical exponent $q$. 

In a recent paper \cite{NSch}, the methods of \cite{LNN}  were modified and extended  to the non-local elliptic equations driven by fractional Laplacian $(-\Delta)^s$, $0<s<1$.

 In this paper we obtain a series of periodic solutions to the equation \eqref{Swift--Hohenberg-equation} with certain additional symmetries. First, we consider a boundary value problem for this equation in a bounded convex polyhedron $\Omega \subset \R^n$, boundary conditions being the \textbf{half-Neumann boundary conditions}
 \begin{equation}
  \label{Navier-2}
  \frac {\partial u} {\partial \bf{n}} = \frac {\partial} {\partial \bf{n}} (\Delta u) = 0 \quad\mbox{on}\quad \partial\Omega,
 \end{equation}
 also called the \textbf{``sliding wall'' boundary conditions}. 

In Section \ref{S:aux} we introduce the appropriate energy space and the energy functional such that its critical points are weak solutions of the problem \eqref{Swift--Hohenberg-equation}--\eqref{Navier-2}. This Section also contains some auxiliary lemmata. 

Section \ref{S:variational_solutions} is the core of our paper. We thoroughly analyse the Nehari manifold\footnote{In this problem the Nehari manifold has a more complicated structure compared to Nehari manifold for the equation \eqref{simplest_equation}, see Remark \ref{bubbles} below.} for the problem \eqref{Swift--Hohenberg-equation}--\eqref{Navier-2} to establish the sufficient conditions of existence of a so-called ``ridge-Nehari solution'' in terms of parameters $(\alpha,\beta)$. We stress that for domains
\begin{equation}
\label{Omega_R}
     \Omega_R=\{Rx\,\big| \, x\in\Omega\}, \qquad R\ge1,
 \end{equation}
these sufficient conditions do not depend on the stretching factor $R$. 

Generally, we cannot rule out the possibility that a solution we obtained is in fact a constant or is generated by a solution in lower dimension (i.e. is independent of some variables). However, in the last subsection of Section \ref{S:variational_solutions} we prove that ridge-Nehari solutions in $\Omega_R$ have a nontrivial dependence on all variables if $R$ is large enough.

In Section \ref{S:periodic_solutions}, similar to \cite{LNN, NSch}, we introduce the concept of fundamental domain and demonstrate that even reflections of a ridge-Nehari solution\footnote{ Extension of a solution by even reflections is possible just due to  boundary conditions \eqref{Navier-2}.} in a fundamental domain generate a solution in the whole space with corresponding symmetries. 

In Section \ref{S:skew_periodic_solutions} we construct a family of solutions with a different structure.

In Appendix, proofs of auxiliary statements are provided. In particular, we prove that the eigenfunctions of the Neumann Laplacian in a convex polyhedron $\Omega$ are orthogonal not only in $L_2(\Omega)$ and in $W^1_2(\Omega)$ (that is well known for arbitrary Lipschitz domain) but also in $W^2_2(\Omega)$.\footnote{ The same is true for the eigenfunctions of the Dirichlet Laplacian. }

\paragraph{Acknowledgements.} We are grateful to A.P. Shcheglova who has pictured Figure~\ref{fig:1}.

 \section{Auxiliary statements}
 \label{S:aux}

We introduce the {\bf energy space}
 \begin{equation}
  {\cal W}(\Omega) = \left \lbrace v \in W^2_2(\Omega) : \frac {\partial u}{\partial \bf{n}} = 0 \mbox{ on } \partial\Omega \right \rbrace\tag{WN2}\label{WNav_2}
 \end{equation}
 and define the {\bf energy functional} 
  \begin{equation}
   \label{energy-functional}
   E[u] = \int\limits_\Omega \left [ \vphantom{\int\limits} \frac 1 2(\Delta u + u)^2 - \frac 1 2 \alpha u^2 - \frac 1 3 \beta u^3 + \frac 1 4 u^4  \right ] \,\,dx.
  \end{equation}
 \begin{remark} 
  \label{compact_embedding_remark}
  By the Sobolev embedding theorem for $n \leq 8$ the space $W^2_2(\Omega)$ is continuously embedded into $L_3(\Omega)$ and $L_4(\Omega)$, and thus $E[u]$ is well-defined on ${\cal W}(\Omega)$. We consider dimensions $n \leq 7$, as in this case these embeddings are compact, and  this is crucial for the proof. However, some statements hold for $n=8$.  
 \end{remark}
 
 By standard variational argument, any critical point of $E[u]$ on ${\cal W}(\Omega)$ satisfies the integral identity
 \begin{equation}
  \label{weak_SH_solution}
  dE[u]h\equiv\int\limits_\Omega \left [ \vphantom{\int\limits} \Delta u \Delta h - 2 (\nabla u, \nabla h) + (1-\alpha) uh - \beta u^2 h + u^3 h \right ] \,\,dx = 0.
 \end{equation}
 for all $h \in {\cal W}(\Omega)$. Therefore, $u$ is a weak (Sobolev) solution of \eqref{Swift--Hohenberg-equation} in $\Omega$, and the second condition in \eqref{Navier-2} is a natural boundary condition. 
 
 From now on, we suppose that $\alpha < 0$. Also, since the replacement $u\mapsto -u$, $\beta\mapsto -\beta$ preserves the equation \eqref{Swift--Hohenberg-equation} and the energy functional, we can assume without loss of generality that $\beta>0$.

  \begin{lemma}
   \label{equivalent_norm_lemma}
   Let $\Omega$ be a convex polyhedron. The quadratic part of \eqref{energy-functional}
   \begin{equation}
    Q[u] = \int\limits_\Omega \Big( (\Delta u + u)^2 - \alpha u^2 \Big) \,\,dx
   \end{equation}
   defines a norm on ${\cal W}(\Omega)$ that is equivalent to the standard norm in $W_2^2(\Omega)$. Namely,
   \begin{equation}
    c \|u\|_{W_2^2}^2 \le Q[u] \le C \|u\|_{W_2^2}^2
   \end{equation}
   where constants $c$ and $C$ depend only on $\alpha$ (in particular, they do not depend on $\Omega$).
  \end{lemma}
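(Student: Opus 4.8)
The upper bound is routine: from $(\Delta u+u)^2\le 2(\Delta u)^2+2u^2$ one gets $Q[u]\le 2\|\Delta u\|_{L_2(\Omega)}^2+(2+|\alpha|)\|u\|_{L_2(\Omega)}^2\le(4+|\alpha|)\|u\|_{W_2^2}^2$, so one may take $C=4+|\alpha|$. All the content is in the lower bound, and the plan is to reduce everything to an $\Omega$-independent comparison of the full Hessian with the Laplacian.

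First I would rewrite $Q$ on $\mathcal W(\Omega)$. Expanding the square and integrating the cross term by parts — legitimate since $\tfrac{\partial u}{\partial\bf n}\in L_2(\partial\Omega)$ for $u\in W_2^2(\Omega)$ and this trace vanishes by the definition of $\mathcal W(\Omega)$ — Green's formula gives $\int_\Omega u\,\Delta u\,dx=-\int_\Omega|\nabla u|^2\,dx$, whence
\begin{equation}
Q[u]=\|\Delta u\|_{L_2(\Omega)}^2-2\|\nabla u\|_{L_2(\Omega)}^2+(1-\alpha)\|u\|_{L_2(\Omega)}^2 .
\end{equation}
Next, the gradient term is absorbed: by Cauchy--Schwarz and Young, $\|\nabla u\|_{L_2(\Omega)}^2=-\int_\Omega u\,\Delta u\,dx\le\varepsilon\|\Delta u\|_{L_2(\Omega)}^2+\tfrac1{4\varepsilon}\|u\|_{L_2(\Omega)}^2$ for every $\varepsilon>0$. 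Because $\alpha<0$ we have $\tfrac1{2(1-\alpha)}<\tfrac12$, so one can fix $\varepsilon=\varepsilon(\alpha)$ in the interval $\big(\tfrac1{2(1-\alpha)},\tfrac12\big)$; then both coefficients in
\begin{equation}
Q[u]\ge(1-2\varepsilon)\,\|\Delta u\|_{L_2(\Omega)}^2+\Big(1-\alpha-\tfrac1{2\varepsilon}\Big)\|u\|_{L_2(\Omega)}^2=:c_1\|\Delta u\|_{L_2(\Omega)}^2+c_2\|u\|_{L_2(\Omega)}^2
\end{equation}
are strictly positive and depend only on $\alpha$.

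It remains to restore the full $W_2^2$ norm, and here convexity of $\Omega$ enters through the inequality $\|D^2u\|_{L_2(\Omega)}\le\|\Delta u\|_{L_2(\Omega)}$, valid for $u\in\mathcal W(\Omega)$ with the constant $1$ independent of $\Omega$. Formally this follows by integrating by parts twice, $\int_\Omega(\Delta u)^2\,dx=\int_\Omega|D^2u|^2\,dx+\int_{\partial\Omega}B[u]\,dS$, where, using $\tfrac{\partial u}{\partial\bf n}=0$, the boundary density $B[u]$ is a quadratic form in the tangential gradient of $u$ with the second fundamental form of $\partial\Omega$ as its matrix; this density vanishes on the flat faces of the polyhedron and is nonnegative since $\Omega$ is convex (the rigorous version — density of smooth functions in $\mathcal W(\Omega)$ and the treatment of the edges and vertices — is exactly what is established in the Appendix). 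Granting this, and using $\|\nabla u\|_{L_2(\Omega)}^2\le\tfrac12\|u\|_{L_2(\Omega)}^2+\tfrac12\|\Delta u\|_{L_2(\Omega)}^2$, one obtains $\|u\|_{W_2^2}^2\le\tfrac32\big(\|u\|_{L_2(\Omega)}^2+\|\Delta u\|_{L_2(\Omega)}^2\big)$, hence $Q[u]\ge\tfrac23\min(c_1,c_2)\,\|u\|_{W_2^2}^2$, with the constant depending only on $\alpha$. In particular $Q$ is positive definite, so it is a norm, equivalent to $\|\cdot\|_{W_2^2}$.

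The hard part is the last step: the bound $\|D^2u\|_{L_2}\le\|\Delta u\|_{L_2}$ with an $\Omega$-free constant on a \emph{non-smooth} convex polyhedron. For smooth convex domains it is the classical Rellich--Grisvard identity; the edges and corners of a polyhedron require separate justification, which is the role of the Appendix. Everything else in the argument is elementary.
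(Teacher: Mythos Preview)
Your argument is correct, but it is organized differently from the paper's. The paper works entirely in the spectral picture: expanding $u=\sum_j c_j v_j$ in Neumann eigenfunctions, it shows (via the Appendix lemma) that $\int_\Omega|D^2u|^2=\sum_j\mu_j^2c_j^2$, so that $\|u\|_{W_2^2}^2=\sum_j(\mu_j^2+\mu_j+1)c_j^2$ and $Q[u]=\sum_j\big((\mu_j-1)^2-\alpha\big)c_j^2$; the equivalence then reduces to bounding the scalar ratio $\frac{(\mu-1)^2-\alpha}{\mu^2+\mu+1}$ on $[0,\infty)$, which is elementary. You instead stay in physical space: Green's formula gives $Q[u]=\|\Delta u\|^2-2\|\nabla u\|^2+(1-\alpha)\|u\|^2$, the Young trick $\|\nabla u\|^2\le\varepsilon\|\Delta u\|^2+\tfrac1{4\varepsilon}\|u\|^2$ isolates $Q[u]\ge c_1\|\Delta u\|^2+c_2\|u\|^2$, and only at the end do you invoke the Rellich--Grisvard identity $\|D^2u\|_{L_2}\le\|\Delta u\|_{L_2}$ on $\mathcal W(\Omega)$. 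Both routes therefore hinge on the same hard point --- the Appendix lemma, which on a convex polyhedron yields exactly the equality $\int_\Omega|D^2u|^2=\int_\Omega(\Delta u)^2$ (your inequality is in fact an identity here). One small inaccuracy: the Appendix does not proceed by a density-of-smooth-functions argument as you suggest, but by approximating $\Omega$ from inside by smoothed convex domains $\Omega_\varepsilon$ and controlling the boundary integrals near edges via the estimates of Escobar and Maz'ya; nonetheless the output is precisely the $\Omega$-independent control you need. Your presentation has the virtue of making transparent that the only place convexity (and the polyhedral structure) enters is the Hessian--Laplacian comparison; the paper's spectral version has the virtue of giving the sharp symbol comparison directly.
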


  Proof is given in Appendix 1.
  \medskip

It is useful to define 
  \begin{equation}
   \label{Sobolev_constant_Neumann}
   S_m = S_m(\Omega) = \inf\limits_{u \in {\cal W}(\Omega)} \frac {(Q[u])^{\frac 1 2}}{\Bigr |\int\limits_\Omega u^m \,\,dx\Bigr |^{\frac 1 m}}, \qquad m =2, 3, 4. 
  \end{equation}
  These infima are achieved, due to Remark \ref{compact_embedding_remark}. 

  \begin{lemma}
   \label{Sobolev_constants_bounded_lemma}
   For domains $\Omega_R$ defined in \eqref{Omega_R}, the quantities $S_m(\Omega_R)$ are bounded from above and from below as $R \to \infty$.
  \end{lemma}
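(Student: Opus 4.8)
The plan is to establish the stronger statement that $S_m(\Omega_R)$ is bounded above and below by positive constants depending only on $n,m,\alpha$ and on $\Omega$, \emph{uniformly for all $R\ge1$}. Since the form $Q$, the $L_m$-norms and the class $\mathcal W$ are translation invariant, while $\Omega\mapsto\Omega+a$ merely translates $\Omega_R$, we may assume $0\in\Omega$; by convexity this gives $\Omega\subseteq\Omega_R$ for every $R\ge1$, and $\Omega_R$ is again a convex polyhedron. For a domain $D$ write $Q_D[u]=\int_D\big((\Delta u+u)^2-\alpha u^2\big)\,dx$, so that $S_m(\Omega_R)$ is the infimum in \eqref{Sobolev_constant_Neumann} with $\Omega$ and $Q$ replaced by $\Omega_R$ and $Q_{\Omega_R}$.

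For the upper bound, fix once and for all a nonnegative $\varphi\in C_0^\infty(\Omega)$ with $\varphi\not\equiv0$. Since $\Omega\subseteq\Omega_R$, we have $\varphi\in C_0^\infty(\Omega_R)\subseteq\mathcal W(\Omega_R)$ for every $R\ge1$, and because $\operatorname{supp}\varphi$ stays away from $\partial\Omega_R$ the quantities $Q_{\Omega_R}[\varphi]>0$ and $\int_{\Omega_R}\varphi^m\,dx>0$ ($m=2,3,4$) equal the corresponding integrals over $\R^n$ and hence are independent of $R$. Using $\varphi$ as a test function in \eqref{Sobolev_constant_Neumann} bounds $S_m(\Omega_R)$ from above by an $R$-independent constant.

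For the lower bound, Lemma \ref{equivalent_norm_lemma}, applied to the convex polyhedron $\Omega_R$ (the constant depending only on $\alpha$), gives $Q_{\Omega_R}[u]\ge c(\alpha)\,\|u\|_{W_2^2(\Omega_R)}^2$, and trivially $\big|\int_{\Omega_R}u^m\,dx\big|^{1/m}\le\|u\|_{L_m(\Omega_R)}$. Thus it suffices to prove a \emph{Sobolev-type inequality uniform in $R$},
\[
\|u\|_{L_m(\Omega_R)}\le C\,\|u\|_{W_2^2(\Omega_R)},\qquad u\in W_2^2(\Omega_R),\ R\ge1,
\]
with $C$ independent of $R$; for then $S_m(\Omega_R)^2\ge c(\alpha)/C^2$. (For $m=2$ this is immediate; indeed $Q_{\Omega_R}[u]\ge-\alpha\|u\|_{L_2(\Omega_R)}^2$ already gives $S_2(\Omega_R)\ge\sqrt{-\alpha}$ for every domain.) To get this inequality I would rescale a \emph{fixed} Gagliardo--Nirenberg inequality on $\Omega$: since $\Omega$ is a bounded Lipschitz domain and, for $n\le7$, the exponents $m\in\{2,3,4\}$ are subcritical for $W_2^2$, there are $\theta=\tfrac{n(m-2)}{4m}\in[0,1)$ and $C_\Omega>0$ with $\|v\|_{L_m(\Omega)}\le C_\Omega\|v\|_{W_2^2(\Omega)}^{\theta}\|v\|_{L_2(\Omega)}^{1-\theta}$ for $v\in W_2^2(\Omega)$ (or, if one prefers, with an additive lower-order term $C_\Omega\|v\|_{L_2(\Omega)}$ and $\|D^2v\|_{L_2(\Omega)}$ in place of $\|v\|_{W_2^2(\Omega)}$). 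Applying this to $v(x):=u(Rx)$, $x\in\Omega$, and using the scaling relations $\|v\|_{L_m(\Omega)}=R^{-n/m}\|u\|_{L_m(\Omega_R)}$, $\|v\|_{L_2(\Omega)}=R^{-n/2}\|u\|_{L_2(\Omega_R)}$ and, for $R\ge1$,
\[
\|v\|_{W_2^2(\Omega)}^2=R^{-n}\|u\|_{L_2(\Omega_R)}^2+R^{2-n}\|\nabla u\|_{L_2(\Omega_R)}^2+R^{4-n}\|D^2u\|_{L_2(\Omega_R)}^2\le R^{4-n}\|u\|_{W_2^2(\Omega_R)}^2,
\]
one finds that all powers of $R$ collect into the exponent $\tfrac nm+2\theta-\tfrac n2$, which vanishes identically for this value of $\theta$ (the optional additive term carries the harmless exponent $\tfrac nm-\tfrac n2\le0$). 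This yields the desired uniform bound.

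I expect the exponent bookkeeping in the last step to be the only genuine obstacle. The naive shortcut --- extend $u$ from $\Omega_R$ to $\R^n$ by a uniform extension operator and apply the Sobolev embedding of $W_2^2(\R^n)$ --- does not work: under the dilation $x\mapsto Rx$ the norm of the rescaled extension operator grows like $R^2$ because of the inhomogeneity of the $W_2^2$-norm, and for the subcritical exponents relevant when $n\le7$ the plain Sobolev inequality is not scale-invariant, so this factor is not absorbed. The cure is to use the \emph{scale-invariant} Gagliardo--Nirenberg inequality instead, whose lower-order part, moreover, works in our favour for $R\ge1$; the powers of $R$ then cancel exactly. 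The remaining ingredients --- the test-function estimate, the reduction via Lemma \ref{equivalent_norm_lemma}, and the scaling identities --- are routine.
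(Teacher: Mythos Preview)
Your argument is correct: the upper bound via a fixed compactly supported test function is the same as in the paper, and for the lower bound your rescaling of a Gagliardo--Nirenberg inequality on the fixed domain $\Omega$ does give the uniform Sobolev embedding $\|u\|_{L_m(\Omega_R)}\le C\|u\|_{W_2^2(\Omega_R)}$ with $C$ independent of $R\ge1$; the exponent $\theta=n(m-2)/(4m)$ makes the powers of $R$ cancel exactly, and the additive lower-order term, if present, carries the nonpositive exponent $n/m-n/2$ and is harmless for $R\ge1$.

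The paper takes a different route for the lower bound: it uses precisely the ``extension shortcut'' you dismiss. One chooses a Stein extension operator $\Pi_R:W_2^2(\Omega_R)\to W_2^2(\R^n)$, chains $Q[u]\ge c\|u\|_{W_2^2(\Omega_R)}^2\ge c\|\Pi_R\|^{-2}\|\Pi_Ru\|_{W_2^2(\R^n)}^2$ with the fixed embedding $W_2^2(\R^n)\hookrightarrow L_m(\R^n)$, and then argues that $\sup_{R\ge1}\|\Pi_R\|<\infty$. Your objection is correct only if one \emph{defines} $\Pi_R$ by conjugating a fixed $\Pi_1$ with the dilation $x\mapsto Rx$; that does produce an $R^2$ loss from the inhomogeneity of the $W_2^2$-norm. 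But Stein's operator (Theorem~6.5 in \cite{stein}) is constructed directly from the Lipschitz character of $\partial\Omega_R$: the Lipschitz constant and the covering multiplicity are scale-invariant, while the covering radius only increases under dilation, which does not worsen the bound. Thus the paper's approach is legitimate. Each method has a mild advantage: yours is self-contained once Gagliardo--Nirenberg on the fixed $\Omega$ is granted and makes the scaling completely explicit; the paper's avoids the interpolation exponent bookkeeping at the cost of invoking the uniform-in-$R$ control of Stein's construction.
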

  \begin{proof}
   Upper bounds are obtained by taking a sample function with compact support. 
   
   Let us now prove the lower bounds. Let $\Pi_R$ be the extension operator from $W_2^2(\Omega_R)$ to $W_2^2(\R^n)$. Then we have ($c$ is the constant from Lemma \ref{equivalent_norm_lemma})
   \begin{equation}
   \aligned
    Q[u]\ge c \|u\|_{W_2^2(\Omega_R)}^2 &\ge c \|\Pi_R\|^{-2} \|\Pi_R u\|_{W_2^2(\R^n)}^2 \\
    &\ge cS_m^2(\mathbb R^n) \|\Pi_R\|^{-2} \|\Pi_R u\|_{L_m(\R^n)}^2 \ge cS_m^2(\mathbb R^n) \|\Pi_R\|^{-2} \|u\|_{L_m(\Omega_R)}^2.
   \endaligned
   \end{equation}
   It follows from the proof of \cite[Theorem 6.5]{stein} that $\|\Pi_R\|$ is bounded by the Lipschitz constant of the boundary, which is invariant under scaling, and geometry of $\Omega$. The latter is comprised of the (maximal) number of overlapping Lipschitz maps, which is also invariant under scaling, and the (minimal) diameter of Lipschitz maps, which increases under dilation. Thus $\|\Pi_R\| \le C$ for all $R \ge 1$. This completes the proof.
  \end{proof}


  \begin{remark}
   \label{S_two_bounds}
   Estimating $\big ( \Delta u + u \big )^2 \ge 0$, we obtain that $S_2 \ge \sqrt{-\alpha}$ for any $\Omega$. Substituting a constant into \eqref{Sobolev_constant_Neumann}, we obtain that $S_2 \le \sqrt{1 - \alpha}$ for any $\Omega$.
  \end{remark}

 \section{Variational solutions of Swift--Hohenberg equation with half-Neumann boundary condition}
  \label{S:variational_solutions}
   Together with energy functional $E[u]$, consider the following functionals:
   \begin{align}
    L[u]&=dE[u]u = Q[u] - \beta \int\limits_\Omega u^3 \,\,dx + \int\limits_\Omega u^4 \,\,dx;\\
    H[u]&=dL[u]u = 2 Q[u] - 3 \beta \int\limits_\Omega u^3 \,\,dx + 4 \int\limits_\Omega u^4 \,\,dx;\\
    K_0[u]&=E[u] - \frac 1 2 L[u] = \frac 1 6\, \beta \int\limits_\Omega u^3 \,dx - \frac 1 4 \int\limits_\Omega u^4 \,dx;\\
    K_1[u] &= L[u] - \frac 1 2 H[u] = \frac 1 2 \beta \int\limits_\Omega u^3 \,dx - \int\limits_\Omega u^4 \,dx.
   \end{align}

   \begin{lemma}
    \label{functionals_continuity_lemma}
    Let $n \leq 7$. Then:
    \begin{enumerate}
     \item Functionals $E[u]$, $L[u]$ and $H[u]$ are well-defined, continuous and weakly lower semicontinuous on ${\cal W}(\Omega)$.
     \item Functionals $K_0[u]$ and $K_1[u]$ are weakly continuous on ${\cal W}(\Omega)$.
     \item Let $v \in {\cal W}(\Omega) \setminus \lbrace 0 \rbrace$ and define a fibration $\varphi_v(t) = E[t v]$. Then
     \begin{equation}\label{fibration_derivatives}
      \aligned   
      \varphi'_v(t)&= t^{-1} L[tv];\\
      \varphi''_v(t)&={t^{-2}} H[tv] - t^{-2} L[tv].
     \endaligned
     \end{equation}
    \end{enumerate}
   \end{lemma}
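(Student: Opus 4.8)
The plan is to verify each of the three parts by routine but careful functional-analytic arguments, relying on the compact Sobolev embeddings $W^2_2(\Omega)\hookrightarrow\hookrightarrow L_3(\Omega),L_4(\Omega)$ available for $n\le 7$ (Remark \ref{compact_embedding_remark}) together with the norm equivalence in Lemma \ref{equivalent_norm_lemma}.

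For part 1, I would first note that $Q[u]$ is, by Lemma \ref{equivalent_norm_lemma}, a squared Hilbert norm on ${\cal W}(\Omega)$ equivalent to $\|\cdot\|_{W^2_2}$; hence it is continuous and, being a norm squared, weakly lower semicontinuous. The remaining terms in $E[u]$, $L[u]$, $H[u]$ are integrals of $u^3$ and $u^4$; by the continuous embeddings into $L_3$ and $L_4$ these are finite and continuous in the strong $W^2_2$-topology. For weak lower semicontinuity of $E$, $L$, $H$ it suffices to observe that along a weakly convergent sequence $u_k\rightharpoonup u$ the compact embeddings give $\int u_k^3\to\int u^3$ and $\int u_k^4\to\int u^4$, so the non-quadratic parts converge (not merely are lsc), while the quadratic part $Q$ is weakly lsc; the sum of a weakly lsc functional and a weakly continuous one is weakly lsc. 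This last computation also proves part 2 outright: $K_0[u]$ and $K_1[u]$ are (affine combinations of) $\int u^3$ and $\int u^4$ only — the quadratic terms cancel by construction — hence they are weakly continuous by the same compactness argument.

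For part 3, I would compute directly. Writing $E[tv]=\tfrac12 t^2 Q[v]-\tfrac12\alpha t^2\int v^2 - \tfrac13\beta t^3\int v^3+\tfrac14 t^4\int v^4$ — or more cleanly, using $E[tv]=\varphi_v(t)$ and the definitions — one gets $\varphi_v'(t)=t\,Q[v]-\beta t^2\int v^3+t^3\int v^4$. Comparing with $L[tv]=t^2Q[v]-\beta t^3\int v^3+t^4\int v^4$ (using $Q[tv]=t^2Q[v]$) yields $\varphi_v'(t)=t^{-1}L[tv]$. Differentiating once more, $\varphi_v''(t)=Q[v]-2\beta t\int v^3+3t^2\int v^4$, and comparing with $H[tv]-L[tv]=t^2Q[v]-2\beta t^3\int v^3+3t^4\int v^4$ gives $\varphi_v''(t)=t^{-2}\big(H[tv]-L[tv]\big)$. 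Alternatively, and more conceptually, one differentiates the identity $\varphi_v(t)=E[tv]$ via the chain rule: $\varphi_v'(t)=dE[tv]v=t^{-1}dE[tv](tv)=t^{-1}L[tv]$, and similarly $\varphi_v''(t)=t^{-1}\frac{d}{dt}L[tv]-t^{-2}L[tv]=t^{-1}\cdot t^{-1}dL[tv](tv)-t^{-2}L[tv]=t^{-2}H[tv]-t^{-2}L[tv]$, which is exactly the claimed formula.

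I do not anticipate a genuine obstacle here; the one point requiring a little care is making precise that the cubic and quartic functionals are \emph{weakly} continuous (not just continuous and bounded), and this is precisely where the hypothesis $n\le 7$ — giving \emph{compact} rather than merely continuous embeddings — is used. Everything else is bookkeeping: checking that $Q$ inherits weak lower semicontinuity from being an equivalent Hilbert norm, and that finite sums respect the relevant semicontinuity properties.
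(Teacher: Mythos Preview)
Your proposal is correct and follows essentially the same approach as the paper: the quadratic part is weakly lower semicontinuous (the paper invokes convexity, you invoke the equivalent-norm property, which amounts to the same thing), the cubic and quartic terms are weakly continuous via the compact embeddings for $n\le 7$, and part 3 is a direct computation. One small slip: in your displayed expansion of $E[tv]$ you have a spurious extra $-\tfrac12\alpha t^2\int v^2$ term (the $-\alpha$ contribution is already inside $Q$), but your subsequent derivatives are correct, so this is just a typo.
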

   \begin{proof}
    First, observe that quadratic terms of functionals $E[u]$, $L[u]$ and $H[u]$ are well-defined and continuous in ${\cal W}(\Omega)$. They are also convex, so they are automatically weakly lower-semicontinuous.
    
    The cubic and quartic terms are obviously continuous in $L_3$ norm (respectively, $L_4$ norm). Per Remark \ref{compact_embedding_remark} ${\cal W}(\Omega)$ is compactly embedded into $L_3(\Omega)$ and $L_4(\Omega)$, and thus these terms are well-defined and weakly continuous on ${\cal W}(\Omega)$. This proves claims 1 and 2. The third claim follows from direct calculations.   
   \end{proof}

   \begin{remark}
    For $n=8$ the embedding of ${\cal W}(\Omega)$ into $L_4(\Omega)$ is continuous, but not compact. In this case the quartic term is not weakly continuous, and so are functionals $K_0[u]$ and $K_1[u]$.
   \end{remark}

   \begin{lemma}
    \label{energy_coercive_lemma}
    Functional $E[u]$ is coercive on ${\cal W}(\Omega)$.
   \end{lemma}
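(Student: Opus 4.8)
The plan is to show that $E[u]\to+\infty$ as $\|u\|_{W_2^2(\Omega)}\to\infty$ by isolating the purely quadratic part of $E$, which Lemma~\ref{equivalent_norm_lemma} bounds from below by a multiple of $\|u\|_{W_2^2(\Omega)}^2$, and by estimating the cubic and quartic terms \emph{together} rather than one at a time. Concretely, I would begin from the identity
\begin{equation}
 E[u]=\frac12\,Q[u]+\int\limits_\Omega g(u)\,\,dx,\qquad g(t):=\frac14\,t^4-\frac13\,\beta\,t^3,
\end{equation}
and note that $g$ is a one-variable polynomial with positive leading coefficient, hence bounded below on $\R$; an elementary computation gives $g(t)\ge g(\beta)=-\beta^4/12$ for every $t\in\R$. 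This is precisely the statement that the quartic term dominates the cubic one pointwise, and it does not depend on the sign of $\beta$.

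It follows that $\int\limits_\Omega g(u)\,\,dx\ge-\frac{\beta^4}{12}\,|\Omega|$, and together with the bound $Q[u]\ge c\,\|u\|_{W_2^2(\Omega)}^2$ from Lemma~\ref{equivalent_norm_lemma} this gives
\begin{equation}
 E[u]\ \ge\ \frac c2\,\|u\|_{W_2^2(\Omega)}^2-\frac{\beta^4}{12}\,|\Omega|,
\end{equation}
whose right-hand side tends to $+\infty$ as $\|u\|_{W_2^2(\Omega)}\to\infty$. Since $\Omega$ is fixed, the subtracted quantity is a constant, so this is exactly coercivity.

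I do not expect any genuine obstacle here. The only point worth flagging is that the natural alternative --- estimating the cubic term via Young's inequality against the quartic term, and then the residual $u^2$ against $u^4$ plus a constant --- also works but forces one to track several small parameters in order to absorb constants; grouping $\tfrac14u^4-\tfrac13\beta u^3$ into $g(u)$ from the outset avoids this entirely. I would also observe that the argument uses only the finiteness of $\int\limits_\Omega u^4\,\,dx$ for $u\in{\cal W}(\Omega)$, i.e.\ the \emph{continuous} embedding $W_2^2(\Omega)\hookrightarrow L_4(\Omega)$ and not its compactness, so the coercivity assertion in fact remains valid for $n=8$.
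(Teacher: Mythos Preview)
Your proof is correct and is essentially the same as the paper's. The paper invokes Young's inequality to get $\tfrac{\beta}{3}u^3\le\tfrac14 u^4+\tfrac1{12}\beta^4$ pointwise, which is exactly your bound $g(t)\ge-\beta^4/12$; both arguments then conclude with the identical estimate $E[u]\ge \tfrac{c}{2}\|u\|_{W_2^2}^2-\tfrac{1}{12}|\Omega|\beta^4$ (the paper absorbs the $\tfrac12$ into the constant).
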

   \begin{proof}
    By Lemma \ref{equivalent_norm_lemma} the quadratic term is equivalent to a square of the norm in  ${\cal W}(\Omega)$. 
    The cubic term can be estimated by the Young inequality as
    \begin{equation}
     \frac \beta 3 \int\limits_\Omega u^3 \,dx \le \frac 1 4 \int\limits_\Omega u^4 \,dx + \frac 1{12} |\Omega| \beta^4,
    \end{equation}
    Then the following estimate holds:
    \begin{equation}
     E[u] \ge c \|u\|^2_{{\cal W}(\Omega)} - \frac \beta 3 \int\limits_\Omega u^3 \,dx + \frac 14 \int\limits_\Omega u^4 \,dx \ge c \|u\|^2_{{\cal W}(\Omega)} - \frac 1{12} |\Omega| \beta^4, 
    \end{equation}
    which tends to infinity as $\|u\|_{{\cal W}(\Omega)} \to \infty$. 
   \end{proof}

  \subsection{Constant solutions}
  \label{ss:constant}

   Lemmata \ref{functionals_continuity_lemma} and \ref{energy_coercive_lemma} imply that the functional $E[u]$ has a global minimizer in ${\cal W}(\Omega)$, see, e.g., \cite[Theorem 26.8]{FK}, which is a (weak) solution to the boundary value problem \eqref{Swift--Hohenberg-equation}--\eqref{Navier-2}. However, we cannot exclude that this solution is trivial, say, constant. Since for $\beta=0$ the energy functional is strictly convex, the same holds for small $\beta>0$,\footnote{The quantitative version of this statement is given in Corollary \ref{convex}.} and therefore, a unique critical point of $E[u]$ is the global minimizer $u \equiv 0$. Moreover, we conjecture that for any $\beta>0$ the global minimizer of $E[u]$ in ${\cal W}(\Omega_R)$ in fact is a constant provided $R$ is large enough. So, we are interested in other critical points of $E[u]$.

First of all, we consider the constant solutions. If $u\equiv c\ne0$ is a solution to the problem \eqref{Swift--Hohenberg-equation}--\eqref{Navier-2} then
   \begin{equation}
    \label{c_pm_equation}
    (1-\alpha) - \beta c + c^2 = 0.
   \end{equation}
  From now on, we assume that $\beta > 2\sqrt{1-\alpha}$. In this case \eqref{c_pm_equation} has two solutions $c_+>c_->0$. 
   
   The energy second variations at constant solutions equal
   \begin{equation}
   \label{second-var-at-const}
    \aligned
    d^2E[0] (h,h) = &\int\limits_\Omega (\Delta h + h)^2 \,dx - \alpha \int\limits_\Omega h^2 \,dx;\\ 
    d^2E[c_\pm] (h,h) = &\int\limits_\Omega (\Delta h + h)^2 \,dx +  \left [ -\alpha -2 \beta c_\pm + 3 c_\pm^2  \right ] \int\limits_\Omega h^2 \,dx.
   \endaligned
   \end{equation}
Thus, zero is always a (strict) local minimum of the energy whereas $c_-$ is a saddle-like critical point, as it is a local maximum in the direction of $h=const$.

Further, since $c_\pm$ satisfy \eqref{c_pm_equation}, we have
   \begin{equation}
    m_\pm := -\alpha - 2 \beta c_\pm + 3 c_\pm^2 = -3 + 2 \alpha + \beta c_\pm. 
   \end{equation}
   
   If $\beta^2 > \frac 9 2(1-\alpha)$, we have
   \begin{equation}
   \label{c+}
    c_+ = \frac {\beta + \sqrt{\beta^2 - 4(1-\alpha)}} 2 > \frac {\sqrt{\frac 9 2 (1-\alpha)}+\sqrt{\frac 1 2 (1-\alpha)}}{2} = 2 \sqrt{\frac {1-\alpha} 2};
   \end{equation}
   \begin{equation}
    m_+ = -3 + 2\alpha + \beta c_+ > -3 + 2\alpha + 3 \sqrt{\frac {1-\alpha} 2} \cdot 2 \sqrt{\frac {1-\alpha} 2} = -\alpha.
   \end{equation}
   So, \eqref{second-var-at-const} implies $d^2E[c_+](h,h)>Q[h]$, and $c_+$ is also a strict local minimum. Moreover, by \eqref{c_pm_equation} its energy is
   \begin{equation}
    E[c_+] = \frac 1 2(1-\alpha) c_+^2 - \frac 1 3 \beta c_+^3 + \frac 1 4 c_+^4 = \frac 1{12}\, c_+^2 \left ( 2(1-\alpha) - c_+^2 \right ).
   \end{equation}By \eqref{c+} we have $E[c_+]< 0$ and therefore zero is not a global minimizer.

  \subsection{Nehari manifold and fibrations}

   Consider the Nehari manifold
   \begin{equation}
    \mathcal N(\Omega) = \left \lbrace v \in {\cal W}(\Omega) \setminus \lbrace 0 \rbrace : L[v] = 0 \right \rbrace.
   \end{equation}
   
   \begin{lemma}
    \label{fibration_classification_lemma}
    Let $0 \not= v \in {\cal W}(\Omega)$. Assume that $\int\limits_\Omega v^3 \,dx \ge 0$. Then there are no negative scaling factors $t$ such that $t v \in \mathcal N(\Omega)$, and exactly one of the following alternatives holds:
    \begin{enumerate}
     \item there are no positive scaling factors $t$ such that $t v \in \mathcal N(\Omega)$. We call such fibrations \textbf{monotonous}.
     \item there exists a unique scaling factor $t>0$ such that $t v \in \mathcal N(\Omega)$. In this case $H[tv]=0$. We call such fibrations \textbf{degenerate monotonous} or \textbf{degenerate}.
     \item there exist exactly two scaling factors $t_2 > t_1 > 0$ such that $t_1 v, t_2 v \in \mathcal N(\Omega)$. In this case we have $H[t_1 v]<0$ and $H[t_2 v]>0$. We call such fibrations \textbf{non-monotonous}.
    \end{enumerate}
   \end{lemma}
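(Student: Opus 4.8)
The plan is to reduce the statement to an elementary analysis of a quadratic polynomial. Fix $v\in{\cal W}(\Omega)\setminus\{0\}$ with $\int_\Omega v^3\,dx\ge 0$, and for $t\neq 0$ factor $t^2$ out of $L[tv]$: setting
\[
\psi(t):=t^{-2}L[tv]=Q[v]-\beta t\!\int_\Omega v^3\,dx+t^2\!\int_\Omega v^4\,dx=at^2-bt+q,
\]
with $a:=\int_\Omega v^4\,dx$, $b:=\beta\int_\Omega v^3\,dx$, $q:=Q[v]$. By Lemma~\ref{equivalent_norm_lemma} we have $q>0$; since $v\not\equiv0$ we have $a>0$; and by the standing assumption $\beta>0$ together with the hypothesis on $\int_\Omega v^3\,dx$ we have $b\ge0$. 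Since $L[tv]=t^2\psi(t)$, a nonzero $t$ satisfies $tv\in\mathcal N(\Omega)$ precisely when $\psi(t)=0$, so everything comes down to locating the roots of the upward-opening parabola $\psi$. For $t<0$ each of $at^2$, $-bt$, $q$ is nonnegative and the first and last are strictly positive, so $\psi(t)>0$; this already gives the claim that there are no negative scaling factors.

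Next I would record the link between $H$ and $\psi$. A direct computation gives $H[tv]=t^2(4at^2-3bt+2q)$, and $4at^2-3bt+2q=2\psi(t)+t\psi'(t)$ where $\psi'(t)=2at-b$ (equivalently, this follows by combining the two identities in \eqref{fibration_derivatives}). Hence at any root $t_\ast$ of $\psi$ one has the clean relation
\[
H[t_\ast v]=t_\ast^{\,3}\,\psi'(t_\ast),
\]
so the sign of $H$ on $\mathcal N(\Omega)$ is dictated purely by the monotonicity of $\psi$ at its roots.

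Finally I would split according to the discriminant $D:=b^2-4aq$. If $D<0$ (in particular whenever $b=0$, that is, $\int_\Omega v^3\,dx=0$), then $\psi>0$ on all of $\R$, there is no positive scaling factor, and the fibration is monotonous. If $D=0$, then necessarily $b>0$ and $\psi$ has a unique, double root $t_0=b/(2a)>0$; there $\psi'(t_0)=0$, so $H[t_0v]=0$ and the fibration is degenerate. If $D>0$, then $b>0$, and since the two roots of $\psi$ have sum $b/a>0$ and product $q/a>0$ they are both positive, say $0<t_1<t_2$; as $\psi$ opens upward it is strictly decreasing at $t_1$ and strictly increasing at $t_2$, whence $H[t_1v]=t_1^{\,3}\psi'(t_1)<0$ and $H[t_2v]=t_2^{\,3}\psi'(t_2)>0$, which is the non-monotonous case. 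These three cases are mutually exclusive and exhaustive, matching the stated trichotomy.

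There is no genuine analytical obstacle here: the whole content is the quadratic-root analysis. The only points requiring care are the bookkeeping of the degenerate subcase $b=0$ (it belongs to $D<0$) and the facts — immediate from Lemma~\ref{equivalent_norm_lemma} and $v\not\equiv 0$ — that $q>0$ and $a>0$, so that $\psi$ really is a nondegenerate quadratic and the relation $H[t_\ast v]=t_\ast^{\,3}\psi'(t_\ast)$ applies.
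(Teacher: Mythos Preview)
Your proof is correct and follows essentially the same approach as the paper: both reduce the claim to elementary analysis of the fibration polynomial. The paper argues via the quartic $\varphi_v(t)=E[tv]$ (counting critical points of a degree-four polynomial with $t=0$ a nondegenerate minimum, and reading the sign of $H$ from $\varphi_v''$ at critical points), while you factor $L[tv]=t^2\psi(t)$ and analyze the quadratic $\psi$ explicitly through its discriminant and the relation $H[t_\ast v]=t_\ast^{3}\psi'(t_\ast)$; these are two presentations of the same computation.
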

   \begin{proof}
    Consider the fibration $\varphi_v(t) = E[tv]$. It is a polynomial of degree 4, and thus must have at least one and no more than three critical points. As $Q[v] > 0$, the quadratic coefficient is positive, thus $t=0$ is always a nondegenerate minimum of fibration. Under assumptions of lemma $\varphi_v'(t) < 0$ for all $t < 0$. Thus if any nonzero critical point exists, it must be positive.

    At critical points of fibration $\varphi_v$ we have $H[tv] = t^{-2} \varphi_v''(t)$ by \eqref{fibration_derivatives}.  
   \end{proof}
   
   \begin{remark}
   \label{bubbles}
For the simplest equation \eqref{simplest_equation} with Neumann boundary conditions, corresponding energy functional is
\begin{equation}
\widetilde E[v] = \int\limits_\Omega \left [ \vphantom{\int\limits} \frac 1 2\,|\nabla v|^2 + \frac 1 2\,v^2 - \frac 1 4\, v^4  \right ] \,dx, \qquad v \in W^1_2(\Omega).
\end{equation}
It is well known that the Nehari manifold in this case is a ``bubble around the origin''. This means that the interior of the Nehari manifold is star-shaped: for any $v \in W^1_2(\Omega) \setminus \lbrace 0 \rbrace$ there exists a unique positive scaling factor $t$ such that $t v \in \mathcal N(\Omega)$.
   
In contrast, Lemma \ref{fibration_classification_lemma} shows that
the Nehari manifold for the problem \eqref{Swift--Hohenberg-equation}--\eqref{Navier-2} (if it is not empty) consists of one or several ``bubbles'' neither of which encompasses the origin.

From now on, for any fibration $\varphi_v$ we assume (changing the sign of $v$ if necessary) that $\int\limits_\Omega v^3 \,dx \ge 0$ and consider only nonnegative arguments of fibration $t \in [0,+\infty)$.
   \end{remark}

By Lemma \ref{fibration_classification_lemma}, any bubble of Nehari manifold contains two distinct parts:
   \begin{itemize}
    \item ``valley'' corresponding to the greater scaling factor $t_2$ and characterized by inequality $H[v] > 0$, and
    \item ``ridge'' corresponding to the smaller scaling factor $t_1$ and characterized by inequality $H[v] < 0$.
   \end{itemize}
\medskip
   
Now we can introduce the sets: 
   \begin{itemize}
    \item the ridge part of Nehari manifold
     \begin{equation}
      \mathcal N_-(\Omega) = \left \lbrace v \in {\cal W}(\Omega) \setminus \lbrace 0 \rbrace : L[v] = 0; H[v] < 0 \right \rbrace;
     \end{equation}
    \item the valley part of Nehari manifold
     \begin{equation}
      \mathcal N_+(\Omega) = \left \lbrace v \in {\cal W}(\Omega) \setminus \lbrace 0 \rbrace : L[v] = 0; H[v] > 0 \right \rbrace;
     \end{equation} 
    \item the ``end of ridge'' set
     \begin{equation}
      \mathcal N_0(\Omega) = \left \lbrace v \in {\cal W}(\Omega) \setminus \lbrace 0 \rbrace : L[v] = 0; H[v] = 0 \right \rbrace;
     \end{equation} 
    \item the ``rear slope'' set
     \begin{equation}
      \mathcal M_-(\Omega) = \left \lbrace v \in {\cal W}(\Omega) \setminus \lbrace 0 \rbrace : L[v] \le 0; H[v] \le 0; K_1[v] \ge 0 \right \rbrace.
     \end{equation}
   \end{itemize}

   
   \begin{remark}
    \begin{enumerate}
     \item If $v \in \mathcal N_-(\Omega)$ then the fibration $\varphi_v(t) = E[t v]$ is non-monotonous and $t=1$ is its (local) nondegenerate maximum.
     \item If $v \in \mathcal N_0(\Omega)$ then the fibration $\varphi_v(t) = E[t v]$ is degenerate. 
    \end{enumerate}   
   \end{remark}
    
   \begin{remark}
    Recall that we assume $\beta>2 \sqrt{1-\alpha}$, and constant solutions $c_\pm$ exist. Obviously, $c_-\in\mathcal N_-(\Omega)$ and $c_+\in\mathcal N_+(\Omega)$.
   \end{remark}
   
  \begin{lemma}
  \label{coercitivity_at_zero_lemma}
    \begin{enumerate}
     \item Let $v$ be such that its fibration is monotonous. Then $E[v] \ge \frac 1 {18} Q[v]$.
     \item Let $v \in \mathcal N_-(\Omega)\cup \mathcal N_0(\Omega)$. Then $E[t v] \ge \frac 1 {12} Q[t v]$ for any $t \in [0,1]$.\smallskip
    \end{enumerate}
   \end{lemma}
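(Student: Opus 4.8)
The plan is to exploit the fibration $\varphi_v(t)=E[tv]$ together with the polynomial identities relating $E$, $L$, $H$, $K_0$ and $K_1$, and then integrate the differential inequalities these identities encode. Recall $\varphi_v(t)=E[tv]$ is a degree-$4$ polynomial with $\varphi_v(0)=0$, $\varphi_v'(0)=0$, positive quadratic coefficient $\tfrac12 Q[v]$; write $\varphi_v(t)=\tfrac12 Q[v]\,t^2-\tfrac13\beta b\,t^3+\tfrac14 q\,t^4$, where $b=\int_\Omega v^3\,dx\ge0$ and $q=\int_\Omega v^4\,dx>0$. In this notation $L[tv]=t\varphi_v'(t)=Q[v]\,t^2-\beta b\,t^3+q\,t^4$ and, by Lemma~\ref{functionals_continuity_lemma}, $K_0[tv]=E[tv]-\tfrac12 L[tv]=\tfrac16\beta b\,t^3-\tfrac14 q\,t^4$.

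\emph{Part 1.} If the fibration is monotonous, then $\varphi_v'(t)>0$ for all $t>0$, i.e.\ $L[tv]>0$ for all $t>0$, hence the cubic $Q[v]-\beta b\,t+q\,t^2>0$ for all $t\ge0$; this forces the discriminant condition $\beta^2 b^2<4 Q[v]\,q$, i.e.\ $\beta b<2\sqrt{Q[v]\,q}$. I would simply plug $t=1$ into $\varphi_v$ and estimate the cubic term by Young's inequality in the sharp form $\tfrac13\beta b\le \tfrac14 q+\tfrac13\beta b\cdot(\text{something})$; more precisely, using $\beta b\le 2\sqrt{Q[v]q}$ and $2\sqrt{Q[v]q}\le \lambda Q[v]+\lambda^{-1}q$ for a suitable $\lambda$, one gets $E[v]=\tfrac12 Q[v]-\tfrac13\beta b+\tfrac14 q\ge c\,Q[v]$ with $c=\tfrac1{18}$ after optimizing the free parameter. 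The constant $\tfrac1{18}$ should come out of balancing: the worst case is when $q$ is as large as the constraint allows, and a short optimization in the two homogeneous variables $Q[v]$ and $q$ (with $\beta b$ at its extreme value) pins down $\tfrac1{18}$.

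\emph{Part 2.} Here $v\in\mathcal N_-(\Omega)\cup\mathcal N_0(\Omega)$ means $t=1$ is a critical point of $\varphi_v$ with $\varphi_v''(1)\le0$, i.e.\ it is the \emph{first} (smaller) positive critical point $t_1\le 1$ in the non-monotonous case, or the unique degenerate one. On $[0,1]$ the fibration is increasing (for $t<t_1$, $\varphi_v'>0$; equality only at $t_1$ if $t_1=1$). The cleanest route is to write, for $t\in[0,1]$,
\[
E[tv]-\tfrac1{12}Q[tv]=\varphi_v(t)-\tfrac1{12}Q[v]\,t^2=\tfrac5{12}Q[v]\,t^2-\tfrac13\beta b\,t^3+\tfrac14 q\,t^4
\]
and show this quartic in $t$ is nonnegative on $[0,1]$. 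Its sign is governed by $g(t):=\tfrac5{12}Q[v]-\tfrac13\beta b\,t+\tfrac14 q\,t^2$; since $g(0)>0$ and $g$ is convex, it suffices to rule out that $g$ dips below zero for some $t\in(0,1]$. I would use the constraint that $t=1$ is a critical point of the \emph{second} variation type (valley/ridge), namely $L[v]=0$ and $H[v]\le0$: from $L[v]=0$ we get $Q[v]=\beta b-q$ (after dividing by $t^2=1$), and from $H[v]\le0$ we get $2Q[v]-3\beta b+4q\le0$, i.e.\ $2(\beta b-q)-3\beta b+4q\le0$, so $\beta b\ge 2q$, equivalently $q\le\tfrac12\beta b$ and $Q[v]=\beta b-q\ge q$. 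Substituting $Q[v]=\beta b-q$ into $g$ and using $q\le\tfrac12\beta b$ reduces the claim to an elementary one-variable inequality in $t$ with parameter $s:=q/(\beta b)\in(0,\tfrac12]$, which I expect to hold with the stated constant $\tfrac1{12}$ and to be tight exactly at $t=1$, $s=\tfrac12$ (the $\mathcal N_0$ case, where one can check $g(1)=\tfrac5{12}Q[v]-\tfrac13\beta b+\tfrac14 q$ is smallest).

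\emph{Main obstacle.} The substantive point is Part~2: one must use \emph{both} constraints $L[v]=0$ and $H[v]\le0$ in the right combination, and verify that after the substitution the residual one-variable polynomial inequality on $[0,1]\times(0,\tfrac12]$ is genuinely nonnegative — the constant $\tfrac1{12}$ is presumably the largest for which this holds, so the estimate is sharp and there is no slack to waste. The corresponding optimization in Part~1 is easier but still requires care to land exactly on $\tfrac1{18}$ rather than a cruder constant. I would organize the proof so that both parts reduce to "a convex quadratic $g(t)$ stays nonnegative on $[0,1]$ given linear constraints on its coefficients," which makes both verifications routine once set up.
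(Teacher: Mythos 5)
Your proposal is correct and follows essentially the same route as the paper: Part~1 is the same two-variable optimization (the paper completes the square where you invoke Young's inequality with an optimized parameter, which is equivalent), and Part~2 uses exactly the same consequences of the constraints ($Q[v]=\beta b-q$ from $L[v]=0$ and $q\le\tfrac12\beta b$, i.e.\ $Q[v]\ge q$, from $H[v]\le0$) to reduce the claim to a quadratic in $t$ on $[0,1]$. The deferred one-variable check does go through: your $g$ is convex with vertex at $t^*=\tfrac{2\beta b}{3q}\ge\tfrac43>1$, hence decreasing on $[0,1]$, and $g(1)=\tfrac1{12}(\beta b-2q)\ge0$ — which is precisely the paper's monotonicity argument for its normalized function $\psi$.
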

   \begin{proof}

Denote for brevity
 \begin{equation}
 \label{QBD}
   Q := Q[v]; \qquad B: = \beta \int\limits_\Omega v^3 \,dx; \qquad D: = \int\limits_\Omega v^4 \,dx.
 \end{equation}

    1. It follows from \eqref{fibration_derivatives} that
    the monotonicity of fibration of $v$ is equivalent to 
    \begin{equation}
    \label{monot}
     L[tv]\equiv Qt^2-Bt^3+Dt^4>0\quad \text{for} \quad t>0  \quad \Longleftrightarrow \quad B^2 - 4 Q D < 0.
    \end{equation}
    Then
    \begin{equation}
    E[v] = \frac Q 2 - \frac B 3 + \frac D 4 > \frac Q 2 - \frac 2 3 \sqrt{Q D} + \frac D 4 
    = Q \bigg ( \left ( \frac 2 3 - \frac 1 2 \sqrt{D/Q}\right)^2+\frac 1 {18}\bigg)\ge\frac {Q} {18},
    \end{equation}
    and the first claim follows.

    \medskip
    
    2. We start with observing that
    \begin{gather}
     0 = L[v] = Q - B + D, \qquad
     0 \ge H[v] = 2 Q - 3 B + 4 D.
    \end{gather}
    Thus 
    \begin{equation}
     \label{B_equals_Q_plus_D}
     B = Q + D; \qquad Q \ge D > 0.
    \end{equation}

    We should prove that for any $t \in [0,1]$
    \begin{equation}
     \frac {Qt^2}2 - \frac {Bt^3}3 + \frac {Dt^4}4 \ge \frac {Q t^2}{12}.
    \end{equation}
    Using the first relation in \eqref{B_equals_Q_plus_D}, we rewrite this inequality as follows:
    \begin{equation}
     \label{psi_definition}
     \psi(t) := \frac {\frac Q 2 t^2 - \frac B 3 t^3 + \frac D 4 t^4}{Q t^2 / 12} = 6 - 4 \left ( 1 + \frac D Q \right ) t + 3 \frac D Q t^2\ge 1.
    \end{equation}
    The second relation in \eqref{B_equals_Q_plus_D} gives
    \begin{gather}
     \psi'(t) = - 4 \left( 1 + \frac D Q \right)  +6 \frac D Q t \le  -4 + 2 \frac D Q \le -2, \qquad t\in[0,1],
    \end{gather}
    thus
    \begin{equation}
     \psi(t) \ge \psi(1) = 2 - \frac D Q \ge 1,
     \qquad t\in[0,1],
    \end{equation}
    and \eqref{psi_definition} follows.
   \end{proof}

   The next lemma shows that the Nehari manifold is separated from zero. Moreover, the ``end of ridge'' is separated from zero uniformly with respect to $\beta$.
   
   \begin{lemma}
    \label{Nehari_sets_away_from_zero_lemma}
    \begin{enumerate}
     \item If $v \in \mathcal N(\Omega)$, then $\|v\|_{L_4} \ge \frac{S_2(\Omega) S_4(\Omega)}{\beta}$.
     \item If $v \in \mathcal N_0(\Omega)$, then $\|v\|_{L_4} \ge S_4(\Omega)$.
    \end{enumerate}
   \end{lemma}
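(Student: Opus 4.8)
The plan is to play the algebraic identities that define the Nehari sets against the Sobolev-type inequalities encoded in the constants $S_m$ of \eqref{Sobolev_constant_Neumann}, namely $Q[v]^{1/2}\ge S_2\,\|v\|_{L_2}$ and $Q[v]^{1/2}\ge S_4\,\|v\|_{L_4}$, together with the elementary bound $\int\limits_\Omega v^3\,dx\le\int\limits_\Omega|v|^3\,dx\le\|v\|_{L_2}\,\|v\|_{L_4}^2$ (Cauchy--Schwarz applied to $|v|\cdot v^2$). Throughout I use that $Q[v]>0$ and $\|v\|_{L_4}>0$ for $v\ne0$ by Lemma \ref{equivalent_norm_lemma}, so that all the divisions below are legitimate.

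For part~1, I would start from $v\in\mathcal N(\Omega)$, i.e. $L[v]=0$, which reads $\beta\int\limits_\Omega v^3\,dx=Q[v]+\int\limits_\Omega v^4\,dx\ge Q[v]>0$. Estimating the cubic term as above and then replacing $\|v\|_{L_2}$ by $Q[v]^{1/2}/S_2$ gives
\[
Q[v]\le\beta\,\|v\|_{L_2}\,\|v\|_{L_4}^2\le\frac{\beta}{S_2}\,Q[v]^{1/2}\,\|v\|_{L_4}^2,
\]
hence $S_2\,Q[v]^{1/2}\le\beta\,\|v\|_{L_4}^2$. Inserting $Q[v]^{1/2}\ge S_4\,\|v\|_{L_4}$ and dividing by $\|v\|_{L_4}$ yields $\|v\|_{L_4}\ge S_2(\Omega)\,S_4(\Omega)/\beta$.

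For part~2, I would use that on $\mathcal N_0(\Omega)$ both $L[v]=0$ and $H[v]=0$. With $Q=Q[v]$, $B=\beta\int\limits_\Omega v^3\,dx$ and $D=\int\limits_\Omega v^4\,dx$ as in \eqref{QBD}, these are the two equations $Q-B+D=0$ and $2Q-3B+4D=0$, whose solution is $B=Q+D$ and $Q=D$, i.e. $Q[v]=\int\limits_\Omega v^4\,dx=\|v\|_{L_4}^4$. Combining with $Q[v]\ge S_4^2\,\|v\|_{L_4}^2$ forces $\|v\|_{L_4}^2\ge S_4^2(\Omega)$, as claimed; the estimate is $\beta$-free precisely because the cubic coefficient $B$ cancels in the combination isolating $Q$ and $D$.

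I do not anticipate a genuine obstacle here: once the two Sobolev inequalities are at hand, the argument is elementary and purely algebraic. The only things to track are the positivity of $Q[v]$ and $\|v\|_{L_4}$ when dividing. An alternative for part~1 would be to use $S_3$ together with the interpolation inequality $\|v\|_{L_3}\le\|v\|_{L_2}^{1/3}\,\|v\|_{L_4}^{2/3}$, but the route above stays within the constants $S_2,S_4$ that occur in the statement.
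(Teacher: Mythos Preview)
Your proof is correct. Part~2 is essentially identical to the paper's argument: both observe that $L[v]=H[v]=0$ forces $Q[v]=\int_\Omega v^4\,dx$, then apply $Q[v]\ge S_4^2\|v\|_{L_4}^2$.

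Part~1, however, follows a genuinely different route. The paper estimates $\beta\int_\Omega v^3\,dx$ via Young's inequality with a parameter, $\beta\int_\Omega v^3\,dx\le\varepsilon\int_\Omega v^2\,dx+\frac{\beta^2}{4\varepsilon}\int_\Omega v^4\,dx$, and then sets $\varepsilon=S_2^2/2$; this requires the standing assumption $\beta>2\sqrt{1-\alpha}$ (to guarantee $\beta^2-2S_2^2>0$ via Remark~\ref{S_two_bounds}) and produces the sharper intermediate bound $\|v\|_{L_4}\ge S_2S_4/\sqrt{\beta^2-2S_2^2}$ before weakening to the stated one. Your argument is more direct: you drop $\int_\Omega v^4\,dx$ from $L[v]=0$, bound $\int_\Omega v^3\,dx\le\|v\|_{L_2}\|v\|_{L_4}^2$ by Cauchy--Schwarz, and then apply both Sobolev inequalities in succession. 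This avoids any parameter choice, does not need $\beta>2\sqrt{1-\alpha}$, and lands exactly on the claimed bound --- at the cost of not recovering the paper's sharper intermediate estimate.
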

   \begin{proof}

  1. By the Cauchy inequality,
    \begin{equation}
     \beta \int\limits_\Omega v^3 \,dx \le \varepsilon \int\limits_\Omega v^2 \,dx + \frac 1 {4 \varepsilon} \beta^2 \int\limits_\Omega v^4 \,dx, \qquad \varepsilon>0.
    \end{equation}
    Then for $v \in \mathcal N(\Omega)$
    \begin{equation}
     0 = L[v] \ge Q[v] - \varepsilon \int\limits_\Omega v^2 \,dx - \frac 1 {4 \varepsilon} \beta^2 \int\limits_\Omega v^4 \,dx + \int\limits_\Omega v^4 \,dx.
    \end{equation}
    Setting $\varepsilon = \frac{S_2^2}2$, we have
    \begin{equation}
     0 \ge \frac 1 2 Q[v] + \frac 1 2 \Bigl[ Q[v] - S_2^2 \int\limits_\Omega v^2 \,dx \Bigr] - \frac {\beta^2}{2 S_2^2} \int\limits_\Omega v^4 \,dx + \int\limits_\Omega v^4 \,dx.
    \end{equation}
    The expression in square brackets is nonnegative, so
    \begin{equation}
     \frac{\beta^2 - 2 S_2^2}{2 S_2^2} \int\limits_\Omega v^4 \,dx \ge \frac 1 2 Q[v].
    \end{equation}
    As $\beta > 2 \sqrt{1-\alpha}$, using Remark \ref{S_two_bounds} we see that
    \begin{equation}
     \beta^2 - 2 S_2^2 > 4(1-\alpha) - 2(1-\alpha) = 2(1-\alpha) > 0.
    \end{equation}
    The embedding theorem implies
    \begin{equation}
     \frac{\beta^2 - 2 S_2^2}{S_2^2}\, \|v\|_{L_4}^4 \ge S_4^2 \|v\|_{L_4}^2.
    \end{equation}
   Thus
    \begin{equation}
     \|v\|_{L_4} \ge \frac {S_2 S_4}{\sqrt{\beta^2 - 2 S_2^2}}.
    \end{equation}
    The claimed estimate follows since $\sqrt{\beta^2 - 2 S_2^2} < \beta$.
    \medskip

    2. Observe that $L[v] = H[v] = 0$ is equivalent to
    \begin{equation}
     Q[v] = \int\limits_\Omega v^4 \,dx = \frac 1 2 \beta \int\limits_\Omega v^3 \,dx.
    \end{equation}
    Then
    \begin{equation}
     \|v\|_{L_4}^4 = \int\limits_\Omega v^4 \,dx = Q[v] \ge S_4^2 \|v\|_{L_4}^2,
    \end{equation}
and the second claim follows.   
    \end{proof}
   
   \begin{corollary}
    \label{rear_slope_weakly_closed_lemma}
    The set $\mathcal M_-(\Omega)$ is weakly closed in ${\cal W}(\Omega)$.
   \end{corollary}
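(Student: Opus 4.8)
The plan is to check that $\mathcal{M}_-(\Omega)$ is stable under weak limits of sequences: given $v_k\in\mathcal{M}_-(\Omega)$ with $v_k\rightharpoonup v$ in $\mathcal{W}(\Omega)$, I must verify that $v$ again satisfies the three defining inequalities and, crucially, that $v\neq 0$. Equivalently, I will exhibit $\mathcal{M}_-(\Omega)$ as a finite intersection of weakly closed subsets of $\mathcal{W}(\Omega)$.

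The three functional inequalities are inherited at once from Lemma \ref{functionals_continuity_lemma}. Since $L$ and $H$ are weakly lower semicontinuous there, $L[v]\le\liminf_k L[v_k]\le 0$ and $H[v]\le\liminf_k H[v_k]\le 0$; since $K_1$ is weakly continuous, $K_1[v]=\lim_k K_1[v_k]\ge 0$. In other words, $\{L\le 0\}$ and $\{H\le 0\}$ are weakly closed as sublevel sets of weakly lower semicontinuous functionals, and $\{K_1\ge 0\}$ is weakly closed because $K_1$ is weakly continuous.

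The only genuine point is to rule out $v=0$, and here the observation is that the single inequality $L[v]\le 0$ already forces a separation from the origin that is uniform on $\mathcal{M}_-(\Omega)$. Indeed, the Cauchy-inequality computation in the proof of Lemma \ref{Nehari_sets_away_from_zero_lemma}(1) uses only $0\ge L[v]$ (rather than the equality $L[v]=0$), together with the bound $\beta^2-2S_2^2>2(1-\alpha)>0$ coming from $\beta>2\sqrt{1-\alpha}$ and Remark \ref{S_two_bounds}; it therefore gives, for every $v\in\mathcal{W}(\Omega)\setminus\{0\}$ with $L[v]\le 0$, the estimate $\|v\|_{L_4(\Omega)}\ge S_2(\Omega)S_4(\Omega)/\beta>0$. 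Consequently
\[
\mathcal{M}_-(\Omega)=\{L\le 0\}\cap\{H\le 0\}\cap\{K_1\ge 0\}\cap\bigl\{v:\ \|v\|_{L_4(\Omega)}\ge S_2(\Omega)S_4(\Omega)/\beta\bigr\},
\]
where the inclusion $\subseteq$ is exactly the estimate just stated and $\supseteq$ is immediate because $\|v\|_{L_4}>0$ excludes $v=0$.

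It remains to note that the fourth set in this intersection is also weakly closed: for $n\le 7$ the embedding $\mathcal{W}(\Omega)\hookrightarrow L_4(\Omega)$ is compact (Remark \ref{compact_embedding_remark}), so $v_k\rightharpoonup v$ implies $v_k\to v$ strongly in $L_4(\Omega)$, whence $\|v\|_{L_4}=\lim_k\|v_k\|_{L_4}\ge S_2(\Omega)S_4(\Omega)/\beta$. Thus $v$ belongs to all four sets, in particular $v\neq 0$, and $v\in\mathcal{M}_-(\Omega)$, which proves weak closedness. The only mildly delicate step is this non-collapse of the weak limit; everything else is a direct reading of the semicontinuity properties recorded in Lemma \ref{functionals_continuity_lemma}.
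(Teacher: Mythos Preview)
Your proof is correct and follows essentially the same route as the paper's: pass the inequalities for $L$, $H$, $K_1$ to the weak limit via Lemma~\ref{functionals_continuity_lemma}, and rule out $v=0$ by combining the $L_4$ lower bound from Lemma~\ref{Nehari_sets_away_from_zero_lemma}(1) with the compact embedding into $L_4$. You are in fact slightly more careful than the paper in making explicit that the proof of Lemma~\ref{Nehari_sets_away_from_zero_lemma}(1) uses only $L[v]\le 0$ (not $L[v]=0$), which is exactly what is needed to apply it on $\mathcal M_-(\Omega)$ rather than just on $\mathcal N(\Omega)$.
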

   \begin{proof}
    Let $\mathcal M_-(\Omega) \ni u_n \rightharpoondown u$. Since $L$ and $H$ are weakly lower semicontinuous and $K_1$ is weakly continuous, inequalities $L[u] \le 0$, $H[u] \le 0$ and $K_1[u] \ge 0$ hold. Finally, since ${\cal W}(\Omega)$ is compactly embedded into $L_4(\Omega)$, we obtain that 
    \begin{equation}
     \|u\|_{L_4} = \lim \|u_n\|_{L_4} \ge \frac {S_2 S_4} \beta > 0.
    \end{equation}
    Thus, $u \not= 0$, and the statement follows.
   \end{proof}

   \begin{lemma}
    \label{pullback_lemma}
    Let $u \in \mathcal M_-(\Omega)$. Then there exists $t^* \in (0,1]$ such that $t^* u \in \mathcal N_-(\Omega) \cup \mathcal N_0(\Omega)$. Moreover, $K_0[t^* u] \leqslant K_0[u]$.
   \end{lemma}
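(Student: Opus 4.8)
The plan is to follow the fibration $\varphi_u(t)=E[tu]$ and to pick out the \emph{first} positive scaling factor at which it meets the Nehari manifold. Throughout I would use the shorthand $Q=Q[u]$, $B=\beta\int_\Omega u^3\,dx$, $D=\int_\Omega u^4\,dx$ from \eqref{QBD}; recall $Q>0$ and $D>0$ since $u\ne0$, while $B\ge0$ by the sign convention of Remark~\ref{bubbles}. In this notation, membership of $u$ in $\mathcal M_-(\Omega)$ amounts to the three inequalities $Q-B+D\le0$, $\ 2Q-3B+4D\le0$, $\ \frac12 B-D\ge0$.

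First I would locate $t^*$. Since $L[tu]=t^2 g(t)$ with $g(t):=Q-Bt+Dt^2$, and $g(0)=Q>0$ while $g(1)=L[u]\le0$, the convex quadratic $g$ (recall $D>0$) has a smallest zero $t^*$, with $g>0$ on $[0,t^*)$; moreover $t^*\le1$, since otherwise $g(1)>0$. As the two zeros of $g$ are symmetric about the vertex $B/(2D)$, the smaller one satisfies $g'(t^*)\le0$, with equality only if $g$ has a double zero. Now $L[t^*u]=(t^*)^2 g(t^*)=0$, so $t^*u\in\mathcal N(\Omega)$, and substituting $Q=Bt^*-D(t^*)^2$ into the definition of $H$ — equivalently, combining \eqref{fibration_derivatives} with $L[t^*u]=0$ and $\varphi_u'(t)=t\,g(t)$ — collapses $H[t^*u]$ to $(t^*)^3 g'(t^*)\le0$. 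Hence $t^*u\in\mathcal N_-(\Omega)\cup\mathcal N_0(\Omega)$.

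For the energy inequality I would note that $K_0[tu]=\frac16 B t^3-\frac14 D t^4=:k(t)$, so that $k'(t)=t^2\bigl(\frac12 B-Dt\bigr)$. For $t\in[0,1]$ one has $\frac12 B-Dt\ge\frac12 B-D=K_1[u]\ge0$, so $k$ is nondecreasing on $[0,1]$; since $t^*\in(0,1]$, this yields $K_0[t^*u]=k(t^*)\le k(1)=K_0[u]$, which completes the proof.

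I expect the only delicate point to be the choice of root: it is precisely the \emph{smaller} zero of $g$ that produces the sign $H[t^*u]\le0$ (the larger zero would correspond to $\mathcal N_+(\Omega)$), so the inequality $g'(t^*)\le0$, i.e., $t^*\le B/(2D)$, is the real content of the first half. The boundary and degenerate configurations ($g(1)=0$, or $g$ with a double zero, in which case $t^*u\in\mathcal N_0(\Omega)$) are all subsumed by this same inequality, and everything else is elementary polynomial manipulation.
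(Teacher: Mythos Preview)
Your proof is correct and follows essentially the same route as the paper's: locate $t^*$ as the first positive zero of $L[tu]$ (the paper phrases this as ``the fibration has a nondegenerate maximum $t^*\in(0,1)$'', invoking the classification of Lemma~\ref{fibration_classification_lemma}, while you compute it directly via the quadratic $g$), and then use $K_1[u]\ge0$ to get monotonicity of $K_0[tu]$ on $[0,1]$, exactly as the paper does with $\frac{d}{dt}K_0[tu]=\frac1t K_1[tu]$. Your explicit identification $H[t^*u]=(t^*)^3 g'(t^*)$ is a clean substitute for the appeal to the classification lemma, but the substance is the same.
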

   \begin{proof}
    If $ u \in \mathcal N_-(\Omega) \cup \mathcal N_0(\Omega)$, then the statement is obvious with $t^*=1$.
    Otherwise we have $L[u] < 0$, so fibration $E[t u]$ has a nondegenerate maximum $t^* \in (0,1)$, that is $t^* u \in \mathcal N_-(\Omega)$. As $K_1[u] \ge 0$, we obtain $K_1[tu] >0$ for all $t \in (0,1)$. As
    \begin{equation}
     \frac {d}{dt}(K_0[tu]) = \frac 1 t K_1[tu],
    \end{equation} 
    we obtain that $K_0[tu]$ strictly increases on $(0,1] \ni t$, and $K_0[t^* u] < K_0[u]$. 
   \end{proof}

The last statement in this subsection gives us an explicit formula for the energy of functions on the ``ridge''. Consider the functional 
   \begin{equation}
    I[v]= \frac { \int\limits_\Omega v^3 \,dx }{\Big (Q[v] \int\limits_\Omega v^4 \,dx\Big )^{\frac 1 2}}.   
   \end{equation}

   \begin{lemma}
    \label{uniformization_lemma}
    Let $v \in {\cal W}(\Omega)$ be such that $\int\limits_\Omega v^3 \,dx > 0$.
    Then:
    \begin{enumerate}
     \item the fibration $\varphi_v(t) = E[t v]$ is 
     \begin{itemize}
         \item monotonous if $I[v] < \frac 2 \beta$;
         \item degenerate if $I[v] = \frac 2 \beta$;
         \item non-monotonous if $I[v] > \frac 2 \beta$.
     \end{itemize}
     
     \item for any $\beta > 2 \left ( I[v] \right )^{-1}$ there exists $\tilde t=\tilde t(v,\beta)$ such that $\tilde t v \in \mathcal N_-(\Omega)$.
     \item \begin{equation}
          \label{uniform_energy}
          E[\tilde t v] = \frac 1 {3 \beta^2} \cdot \frac {(Q[v])^3}{\Big ( \int\limits_\Omega v^3 \,dx \Big )^2} \cdot f\bigg (\frac {\beta I[v]} 2 \bigg ),
         \end{equation}
           where
           \begin{equation}
            f(s) = \frac {1 + 3 \sqrt{1 - s^{-2}}}{\left ( 1 + \sqrt{1 - s^{-2}} \right )^3}.
           \end{equation}
     \item $f(s)$ decreases on $[1,+\infty) \ni s$; $f(1) = 1$; $\lim_{s \to +\infty} f(s) = \frac 1 2$.
    \end{enumerate}
   \end{lemma}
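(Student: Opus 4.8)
The plan is to reduce every assertion to the analysis of the quartic fibration
\[
  \varphi_v(t) = E[tv] = \frac{Q}{2}\,t^2 - \frac{B}{3}\,t^3 + \frac{D}{4}\,t^4 ,
\]
where $Q=Q[v]$, $B=\beta\int_\Omega v^3\,dx$, $D=\int_\Omega v^4\,dx$ are the (positive) quantities from \eqref{QBD}. Its nonzero critical points are precisely the positive roots of $Dt^2-Bt+Q=0$, whose discriminant is $B^2-4QD$. First I would record the identity $I[v]^2=\dfrac{\big(\int_\Omega v^3\,dx\big)^2}{Q[v]\int_\Omega v^4\,dx}=\dfrac{B^2}{\beta^2QD}$, so that with $s:=\tfrac{\beta I[v]}{2}$ one has $s^{-2}=\dfrac{4QD}{B^2}$ and hence
\[
  \frac{B^2-4QD}{B^2}=1-s^{-2}.
\]
Thus $B^2-4QD$ has the same sign as $\beta I[v]-2$. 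Since $\int_\Omega v^3\,dx>0$, Lemma \ref{fibration_classification_lemma} excludes negative critical points and classifies the fibration by this sign: $B^2-4QD<0$ gives monotonous, $=0$ gives degenerate, $>0$ gives non-monotonous — which is claim 1. In the last case the smaller positive root
\[
  \tilde t=\frac{B-\sqrt{B^2-4QD}}{2D}=\frac{2Q}{B+\sqrt{B^2-4QD}}
\]
(the two forms agree after multiplying numerator and denominator by $B+\sqrt{B^2-4QD}$ and using $B^2-(B^2-4QD)=4QD$) is a nondegenerate local maximum of $\varphi_v$, i.e. $H[\tilde tv]<0$, so $\tilde tv\in\mathcal N_-(\Omega)$; this is claim 2.

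For claim 3 I would evaluate $E[\tilde tv]=\varphi_v(\tilde t)$ by first using the critical-point relation $D\tilde t^2=B\tilde t-Q$ to eliminate the quartic term, which after a short simplification gives the compact form $\varphi_v(\tilde t)=\tfrac{1}{12}\tilde t^2(3Q-B\tilde t)$. Writing $r:=\sqrt{1-s^{-2}}=\dfrac{\sqrt{B^2-4QD}}{B}\in[0,1)$, the rationalized root becomes $\tilde t=\dfrac{2Q}{B(1+r)}$, so $B\tilde t=\dfrac{2Q}{1+r}$, $\ 3Q-B\tilde t=Q\dfrac{1+3r}{1+r}$ and $\tilde t^2=\dfrac{4Q^2}{B^2(1+r)^2}$; substituting and recalling $B^2=\beta^2\big(\int_\Omega v^3\,dx\big)^2$ yields exactly \eqref{uniform_energy} with $f(s)=\dfrac{1+3r}{(1+r)^3}$, $r=\sqrt{1-s^{-2}}$. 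Claim 4 is then a one-line calculus exercise best carried out in the variable $r$: since $f=(1+3r)(1+r)^{-3}$ has derivative $\dfrac{df}{dr}=\dfrac{-6r}{(1+r)^4}\le0$, $f$ decreases in $r$; as $r=\sqrt{1-s^{-2}}$ increases on $[1,+\infty)$ from $r(1)=0$ to $r\to1$, it follows that $f(s)$ decreases on $[1,+\infty)$ with $f(1)=1$ and $\lim_{s\to+\infty}f(s)=\tfrac{1+3}{(1+1)^3}=\tfrac12$.

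None of these steps is deep; the part that needs the most care is the algebraic reduction in claim 3 — it is essential to apply the critical-point identity to drop the $t^4$ term \emph{before} substituting for $\tilde t$, and to use the rationalized form $\tilde t=\frac{2Q}{B(1+r)}$ rather than the original one, since it is the denominator $(1+r)^3$ (and not $(1-r)^3$) that produces the stated $f$. I would also note in passing that the boundary value $s=1$ (where $r=0$, the single degenerate critical point is $\tilde t=\frac{B}{2D}$, and $f(1)=1$) is consistent with \eqref{uniform_energy} by continuity, although it lies just outside the strict range $\beta>2(I[v])^{-1}$ of claim 2.
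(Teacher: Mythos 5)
Your proposal is correct and follows essentially the same route as the paper's proof in Appendix 2: the sign of $B^2-4QD$ expressed through $I[v]$, the choice of the smaller root of $L[tv]=0$, the reduction $E[\tilde tv]=\tfrac1{12}\tilde t^2(3Q-B\tilde t)$ at the critical point, and the final substitution yielding $\tfrac{Q^3}{3B^2}f(s)$. Your only (harmless) deviation is parametrizing by $r=\sqrt{1-s^{-2}}$ instead of the paper's $s\pm\sqrt{s^2-1}$, and carrying out explicitly the calculus for claim 4 that the paper leaves to the reader.
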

   Proof is given in the Appendix 2. Notice that the right-hand side in \eqref{uniform_energy} is homogeneous with respect to $u$.
   
  \begin{corollary}
  \label{convex}
    The Nehari manifold for the problem \eqref{Swift--Hohenberg-equation}--\eqref{Navier-2} is empty if and only if 
    \begin{equation}
     \beta<\beta_* = 2 \,\Big(\sup_{v \in \cal W} I[v]\Big)^{-1}.  
    \end{equation}
In particular, if $\beta<2 S_2(\Omega)$ then $u\equiv0$ is a unique solution of \eqref{Swift--Hohenberg-equation}--\eqref{Navier-2}.
  \end{corollary}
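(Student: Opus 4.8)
The plan is to read off the emptiness of $\mathcal N(\Omega)$ from the fibration trichotomy and then reduce the statement to whether $I$ attains its supremum. Under the standing convention $\int_\Omega v^3\,dx\ge0$, the ray $\{tv:t>0\}$ meets $\mathcal N(\Omega)$ precisely in the degenerate and non-monotonous cases (alternatives~2 and~3 of Lemma~\ref{fibration_classification_lemma}); if $\int_\Omega v^3\,dx=0$ then $L[tv]=\bigl(Q[v]+t^2\!\int_\Omega v^4\,dx\bigr)t^2>0$ for $t>0$, so the fibration is monotonous and its ray contributes nothing. Hence $\mathcal N(\Omega)\ne\emptyset$ if and only if there is some $v$ with $\int_\Omega v^3\,dx>0$ for which, by Lemma~\ref{uniformization_lemma}(1), $I[v]\ge\tfrac{2}{\beta}$.

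The equivalence of ``there is a $v$ with $I[v]\ge\tfrac{2}{\beta}$'' with ``$\sup_{v}I[v]\ge\tfrac{2}{\beta}$'' is trivial except at the borderline $\beta=\beta_*$, where it requires $\sup I$ to be \emph{attained}; this is the one point needing genuine work, the rest being bookkeeping with the lemmata. I would prove attainment by the direct method, exactly as for the constants $S_m$: take a maximizing sequence and, using scale-invariance of $I$, normalize $Q[v_n]=1$; by Lemma~\ref{equivalent_norm_lemma} the $W^2_2$-norms stay bounded, so along a subsequence $v_n\rightharpoondown v$ in $W^2_2(\Omega)$ and $v_n\to v$ in $L_3(\Omega)$ and $L_4(\Omega)$ by the compact embedding of Remark~\ref{compact_embedding_remark}. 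Since $Q[v_n]=1$ and $I[v_n]\to\sup I>0$, the Hölder bound $\int_\Omega v_n^3\,dx\le|\Omega|^{1/4}\bigl(\int_\Omega v_n^4\,dx\bigr)^{3/4}$ keeps $\int_\Omega v_n^4\,dx$ bounded away from $0$, so $v\ne0$ and $\int_\Omega v^3\,dx>0$; finally weak lower semicontinuity of $Q$ gives $Q[v]\le1$, whence $I[v]\ge\lim_n I[v_n]=\sup I$. Granting this, the criterion above becomes $\beta\ge\beta_*$, so that $\mathcal N(\Omega)=\emptyset\iff\beta<\beta_*$.

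For the ``in particular'' assertion: any nonzero weak solution $u$ of \eqref{Swift--Hohenberg-equation}--\eqref{Navier-2} satisfies $L[u]=dE[u]u=0$, hence $u\in\mathcal N(\Omega)$; so $\mathcal N(\Omega)=\emptyset$ forces $u\equiv0$ to be the unique solution, and it suffices to show $\beta_*\ge2S_2(\Omega)$, i.e. $I[v]\le1/S_2(\Omega)$ for every $v\in{\cal W}(\Omega)$. This follows at once from Cauchy--Schwarz, $\int_\Omega v^3\,dx\le\bigl(\int_\Omega v^2\,dx\bigr)^{1/2}\bigl(\int_\Omega v^4\,dx\bigr)^{1/2}$, together with $S_2^2\int_\Omega v^2\,dx\le Q[v]$, which is exactly the definition \eqref{Sobolev_constant_Neumann} of $S_2$.
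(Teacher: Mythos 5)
Your proof is correct and follows essentially the same route as the paper: the emptiness criterion is read off from the fibration trichotomy via Lemma~\ref{uniformization_lemma}(1), and the ``in particular'' clause is exactly the paper's Cauchy--Bunyakovsky estimate $I[v]\le S_2^{-1}$. You additionally make explicit a detail the paper leaves implicit --- that $\sup_{v}I[v]$ is attained, which is genuinely needed for nonemptiness of the Nehari manifold at the borderline $\beta=\beta_*$ --- and your direct-method argument for attainment (normalizing $Q[v_n]=1$, using the compact embeddings of Remark~\ref{compact_embedding_remark}, and the H\"older bound keeping $\int_\Omega v_n^4\,dx$ away from zero) is sound.
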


\begin{proof}
    The first statement immediately follows from the first claim in Lemma \ref{uniformization_lemma}. The second one follows from the Cauchy--Bunyakovsky inequality: 
   \begin{equation}
      I[v]\le \frac {\Big(\int_\Omega u^2 dx \int_\Omega u^4 dx\Big)^{\frac 12}}{\Big(Q[v] \int_\Omega v^4 dx\Big)^{\frac 12}} \le S_2^{-1}.
    \end{equation}
\end{proof}

\begin{remark}
    Two-sided estimates of $S_2(\Omega)$ in terms of the parameter $\alpha$ are given in Remark \ref{S_two_bounds}.
\end{remark}

  \subsection{Nehari method of obtaining a solution}
   
   Now we can deal with minimization problem:
   \begin{lemma}
    \label{Nehari_minimum_attained_lemma}
    Functional $E$ attains its minimum on $\mathcal N_-(\Omega) \cup \mathcal N_0(\Omega)$.
   \end{lemma}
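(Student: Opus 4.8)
The plan is to run the direct method on the ``ridge'' of the Nehari manifold, with the standard correction forced by the fact that $\mathcal N_-(\Omega)$ is cut out by an equality ($L=0$) and a \emph{strict} inequality ($H<0$), neither of which survives weak convergence. Set
\begin{equation}
 \mu = \inf\{\, E[v] : v \in \mathcal N_-(\Omega) \cup \mathcal N_0(\Omega) \,\}.
\end{equation}
First I would check that $\mu$ is finite: the set is nonempty (recall $c_- \in \mathcal N_-(\Omega)$), so $\mu \le E[c_-] < \infty$; and for every $v$ in this set Lemma \ref{coercitivity_at_zero_lemma}(2) with $t=1$ gives $E[v] \ge \frac 1{12}\,Q[v] > 0$ (as $v \neq 0$), whence $\mu \ge 0$. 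Now take a minimizing sequence $u_n \in \mathcal N_-(\Omega) \cup \mathcal N_0(\Omega)$, $E[u_n] \to \mu$. The same bound $E[u_n] \ge \frac 1{12}\,Q[u_n] \ge \frac c{12}\,\|u_n\|_{W_2^2}^2$ (Lemma \ref{equivalent_norm_lemma}) shows $\{u_n\}$ is bounded in $\mathcal W(\Omega)$, so after passing to a subsequence $u_n \rightharpoondown u$.

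The key point is to identify the limit. I would first record the inclusion $\mathcal N_-(\Omega) \cup \mathcal N_0(\Omega) \subset \mathcal M_-(\Omega)$: if $L[v]=0$ and $H[v]\le 0$, then $K_1[v] = L[v] - \frac 12 H[v] = -\frac 12 H[v] \ge 0$, so $v \in \mathcal M_-(\Omega)$. Hence $u_n \in \mathcal M_-(\Omega)$, and since $\mathcal M_-(\Omega)$ is weakly closed (Corollary \ref{rear_slope_weakly_closed_lemma}) we get $u \in \mathcal M_-(\Omega)$; in particular $u \ne 0$. By Lemma \ref{pullback_lemma} there is $t^* \in (0,1]$ with $t^* u \in \mathcal N_-(\Omega) \cup \mathcal N_0(\Omega)$ and $K_0[t^* u] \le K_0[u]$. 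On the other hand $K_0 = E - \frac 12 L$ agrees with $E$ on the Nehari manifold and is weakly continuous (Lemma \ref{functionals_continuity_lemma}(2)), so $K_0[u] = \lim_n K_0[u_n] = \lim_n E[u_n] = \mu$. Since also $L[t^* u]=0$, i.e. $E[t^* u] = K_0[t^* u]$, the chain
\begin{equation}
 \mu \le E[t^* u] = K_0[t^* u] \le K_0[u] = \mu
\end{equation}
forces $E[t^* u] = \mu$, so $E$ attains its minimum on $\mathcal N_-(\Omega) \cup \mathcal N_0(\Omega)$ at the point $t^* u$.

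The only real obstacle is the one just circumvented: neither $\mathcal N_-(\Omega)$ nor $\mathcal N_0(\Omega)$ is weakly closed, so a priori the weak limit of a minimizing sequence could vanish, could satisfy $L[u]<0$, or could fall onto the ``valley'' branch $H>0$. Passing to the weakly closed superset $\mathcal M_-(\Omega)$ rules out the first possibility (the uniform lower bound of Lemma \ref{Nehari_sets_away_from_zero_lemma}, built into Corollary \ref{rear_slope_weakly_closed_lemma}, keeps $\|u\|_{L_4}$ away from $0$), while the pullback of Lemma \ref{pullback_lemma} returns $u$ to the Nehari manifold \emph{without raising} $K_0$; this is exactly enough because $K_0$, unlike $E$, is weakly continuous, so $K_0[u]$ equals the limit value $\mu$ rather than merely bounding it from below. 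I expect nothing beyond this to require care, apart from the bookkeeping already packaged in Lemmata \ref{coercitivity_at_zero_lemma}, \ref{pullback_lemma} and Corollary \ref{rear_slope_weakly_closed_lemma}.
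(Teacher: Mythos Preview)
Your proof is correct and follows essentially the same route as the paper: pass to the weakly closed superset $\mathcal M_-(\Omega)$, use weak continuity of $K_0$ to transfer the infimum to the limit, then pull back onto the ridge via Lemma \ref{pullback_lemma}. The only cosmetic difference is that the paper argues by contradiction (using the \emph{strict} inequality $K_0[t^*u]<K_0[u]$ from the proof of Lemma \ref{pullback_lemma}) to conclude that the weak limit $u$ itself already lies in $\mathcal N_-(\Omega)\cup\mathcal N_0(\Omega)$, whereas you are content to exhibit $t^*u$ as a minimizer; and the paper invokes general coercivity (Lemma \ref{energy_coercive_lemma}) for boundedness rather than the sharper ridge estimate $E\ge\frac1{12}Q$.
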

   \begin{proof}
    By Lemma \ref{energy_coercive_lemma} functional $E$ is coercive, thus any minimizing sequence $u_n\in \mathcal N_-(\Omega) \cup \mathcal N_0(\Omega)$ must be bounded in ${\cal W}(\Omega)$. Then without loss of generality we can assume that $u_n \rightharpoondown u$. By Corollary \ref{rear_slope_weakly_closed_lemma} we have $u\in \mathcal M_-(\Omega)$.
    
    We claim that in fact $u\in \mathcal N_-(\Omega) \cup \mathcal N_0(\Omega)$. Indeed, otherwise we can see from the proof of Lemma \ref{pullback_lemma} that there is $t^* \in (0,1)$ such that $t^* u\in \mathcal N_-(\Omega)$ and $K_0[t^*u]<K_0[u]$. Since $K_0$ is weakly continuous (Lemma \ref{functionals_continuity_lemma}), this implies
    \begin{equation}
     \lim_{n \to \infty} E[u_n] =\lim_{n \to \infty} \left(E[u_n]-\frac 12 L[u_n] \right) = \lim_{n \to \infty} K_0[u_n]=K_0[u] > K_0[t^* u] = E[t^* u] - \frac 1 2 L[t^* u] = E[t^* u],
    \end{equation}
that is impossible since $u_n$ is a minimizing sequence. Thus, the claim follows, and $u$ is the minimizer we are looking for.
   \end{proof}
   
   From this point on $U$ will denote the minimizer constructed in Lemma \ref{Nehari_minimum_attained_lemma}.
   
   \begin{lemma}
    \label{negative_curvature_lemma}
    For all $\beta >2 S_3^3 S_4^{-2}$ the strict inequality $H[U] < 0$ holds or, equivalently, $U\not\in \mathcal N_0(\Omega)$.
   \end{lemma}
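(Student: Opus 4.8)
The plan is to argue by contradiction: suppose $U\in\mathcal N_0(\Omega)$ and exhibit an element of $\mathcal N_-(\Omega)$ with strictly smaller energy, which contradicts the fact that $U$ minimizes $E$ on $\mathcal N_-(\Omega)\cup\mathcal N_0(\Omega)$ (Lemma \ref{Nehari_minimum_attained_lemma}).

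First I would pin down the energy of $U$. If $U\in\mathcal N_0(\Omega)$, then $L[U]=H[U]=0$, which, exactly as in the proof of claim 2 of Lemma \ref{Nehari_sets_away_from_zero_lemma}, gives $Q[U]=\int_\Omega U^4\,dx=\tfrac12\beta\int_\Omega U^3\,dx$; in particular $\int_\Omega U^3\,dx=2Q[U]/\beta>0$. A one-line computation then yields $E[U]=\tfrac1{12}Q[U]=\tfrac1{3\beta^2}(Q[U])^3\big(\int_\Omega U^3\,dx\big)^{-2}$ (equivalently, the right-hand side of \eqref{uniform_energy} with $f(1)=1$). By the definition of $S_3$ in \eqref{Sobolev_constant_Neumann},
\begin{equation}
E[U]=\frac1{3\beta^2}\Big(\frac{(Q[U])^{3/2}}{\int_\Omega U^3\,dx}\Big)^{2}\ \ge\ \frac{S_3^6}{3\beta^2}.
\end{equation}

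Next I would choose the competitor. Let $w\in{\cal W}(\Omega)$ realize $S_3$ (it exists by Remark \ref{compact_embedding_remark}); replacing $w$ by $-w$ if needed, we may assume $\int_\Omega w^3\,dx>0$, so $\int_\Omega w^3\,dx=(Q[w])^{3/2}S_3^{-3}$. From $S_4\le (Q[w])^{1/2}\big(\int_\Omega w^4\,dx\big)^{-1/4}$ we get $\big(\int_\Omega w^4\,dx\big)^{1/2}\le Q[w]\,S_4^{-2}$, whence
\begin{equation}
I[w]=\frac{\int_\Omega w^3\,dx}{\big(Q[w]\int_\Omega w^4\,dx\big)^{1/2}}\ \ge\ \frac{(Q[w])^{3/2}S_3^{-3}}{(Q[w])^{3/2}S_4^{-2}}=\frac{S_4^2}{S_3^3}.
\end{equation}
Hence the hypothesis $\beta>2S_3^3S_4^{-2}$ forces $\beta>2\,(I[w])^{-1}$, and in fact $\tfrac12\beta\,I[w]>1$ strictly. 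Then by claims 2 and 3 of Lemma \ref{uniformization_lemma} there is $\tilde t>0$ with $\tilde t w\in\mathcal N_-(\Omega)$, and by \eqref{uniform_energy}, using $(Q[w])^3\big(\int_\Omega w^3\,dx\big)^{-2}=S_3^6$,
\begin{equation}
E[\tilde t w]=\frac{S_3^6}{3\beta^2}\,f\!\Big(\tfrac12\beta\,I[w]\Big)\ <\ \frac{S_3^6}{3\beta^2},
\end{equation}
the strict inequality holding because $f$ is strictly decreasing on $[1,+\infty)$ with $f(1)=1$ (immediate from the explicit formula for $f$) and $\tfrac12\beta I[w]>1$. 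Comparing with the bound on $E[U]$ gives $E[\tilde t w]<E[U]$, while $\tilde t w\in\mathcal N_-(\Omega)\subset\mathcal N_-(\Omega)\cup\mathcal N_0(\Omega)$; this contradicts the minimality of $U$. Therefore $U\notin\mathcal N_0(\Omega)$, i.e. $H[U]<0$.

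The argument is essentially routine once the right test function is chosen; the one point requiring a small idea is the bound $I[w]\ge S_4^2S_3^{-3}$ for the $S_3$-extremal $w$, obtained from the chain of Sobolev-type inequalities above — this is exactly what converts the threshold $\beta>2S_3^3S_4^{-2}$ into $\tfrac12\beta I[w]>1$, hence into the strict energy drop $f(\tfrac12\beta I[w])<1$ that closes the proof.
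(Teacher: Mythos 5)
Your proof is correct, but it closes the contradiction differently from the paper. Both arguments take the $S_3$-extremal as competitor and use the same estimate $(I[w])^{-1}\le S_3^3S_4^{-2}$ to place $\tilde t w$ on the ridge with $E[\tilde t w]\le \frac{S_3^6}{3\beta^2}$; the difference is in the lower bound for $E[U]$ when $U\in\mathcal N_0(\Omega)$. The paper bounds $E[U]\ge\frac1{12}Q[U]\ge\frac{S_4^4}{12}$ via Lemma \ref{coercitivity_at_zero_lemma} and part 2 of Lemma \ref{Nehari_sets_away_from_zero_lemma}, and the hypothesis $\beta>2S_3^3S_4^{-2}$ enters at the very end to make $\frac{S_3^6}{3\beta^2}<\frac{S_4^4}{12}$. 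You instead exploit the identity $E[U]=\frac1{12}Q[U]=\frac1{3\beta^2}\bigl((Q[U])^{3/2}/\int_\Omega U^3\,dx\bigr)^2\ge\frac{S_3^6}{3\beta^2}$ on $\mathcal N_0(\Omega)$, so the competitor and the hypothetical minimizer are compared against the \emph{same} threshold $\frac{S_3^6}{3\beta^2}$, and the strict inequality comes from the strict decrease of $f$ at $s=\tfrac12\beta I[w]>1$ (which you rightly verify, since the paper's Lemma \ref{uniformization_lemma} only asserts ``decreases''). Your route is self-contained within Lemma \ref{uniformization_lemma} plus the definition of $S_3$, dispensing with Lemmas \ref{coercitivity_at_zero_lemma} and \ref{Nehari_sets_away_from_zero_lemma}(2); the paper's route yields the quantitative bound $E[U]\ge S_4^4/12$ on $\mathcal N_0$, which is reused elsewhere (e.g.\ in the irreducibility argument). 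Both are valid under the stated threshold on $\beta$.
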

   
   \begin{proof}
    Let $\hat u$ be the minimizer of \eqref{Sobolev_constant_Neumann} with $m=3$. By part 1 of Lemma \ref{uniformization_lemma} for any $\beta > 2 \left ( I[\hat u] \right )^{-1}$ there exists $\tilde t >0$ such that $\tilde t \hat u \in \mathcal N_-(\Omega)$. Notice that 
    \begin{equation}
     (I[\hat u])^{-1} = \frac {\Big(Q[\hat u] \int_\Omega \hat u^4 dx\Big)^{\frac 12}}{\int_\Omega \hat u^3 dx} = S_3^3\,\frac {\Big(Q[\hat u] \int_\Omega \hat u^4 dx\Big)^{\frac 12}}{\big ( Q[\hat u] \big )^{\frac 3 2}}\le S_3^3 S_4^{-2}.
    \end{equation}
Further, parts 2 and 3 of Lemma \ref{uniformization_lemma} show that 
    \begin{equation}
     E[\tilde t \hat u] = \frac 1 {3 \beta^2}\ S_3^6 \cdot f\bigg(\frac {\beta I[\hat u]}2\bigg)\le \frac {S_3^6} {3 \beta^2}.
    \end{equation}

    As $U$ is the minimizer of  $E$ on $\mathcal N_-(\Omega) \cup \mathcal N_0(\Omega)$,
    \begin{equation}
     \label{energy_b_growth_order}
     E[U] \le E[\tilde t \hat u] \le \frac {S_3^6} {3 \beta^2}.
    \end{equation}

    Assume now that $U \in \mathcal N_0(\Omega)$. Then by Lemma \ref{coercitivity_at_zero_lemma} and part 2 of Lemma \ref{Nehari_sets_away_from_zero_lemma} we obtain
    \begin{equation}
     \label{ridge_side_energy_lower_estimate}
     E[U] \ge \frac 1 {12} Q[U] \ge \frac {S_4^2} {12} \|U\|_{L_4}^2 \ge \frac {S_4^4} {12}.
    \end{equation}
    This contradicts \eqref{energy_b_growth_order} if $\beta > {2 S_3^3} {S_4^{-2}}$. 
   \end{proof}

   \begin{theorem}
    \label{Nehari_solution_exists_theorem}
    Let $\beta > \beta_0:= 2 \max\{\sqrt{1-\alpha}, S_3^3 S_4^{-2} \}$. Then the minimizer $U$ is a weak solution of the boundary value problem \eqref{Swift--Hohenberg-equation}--\eqref{Navier-2}.
   \end{theorem}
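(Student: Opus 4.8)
The plan is to upgrade the constrained minimizer $U$ to a free critical point of $E$ on $\mathcal W(\Omega)$ by the standard ``natural constraint'' (Nehari) argument; the decisive point is that the hypothesis $\beta>\beta_0$ places $U$ in the \emph{nondegenerate} part $\mathcal N_-(\Omega)$ of the Nehari manifold, so a Lagrange multiplier can be used.

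First I would note that $\beta>\beta_0$ ensures both the standing assumption $\beta>2\sqrt{1-\alpha}$ (under which $U$ exists, by Lemma~\ref{Nehari_minimum_attained_lemma}) and $\beta>2S_3^3S_4^{-2}$, whence Lemma~\ref{negative_curvature_lemma} gives $H[U]<0$, i.e. $U\in\mathcal N_-(\Omega)$. Next I would observe that $L$ is $C^1$ on $\mathcal W(\Omega)$: its quadratic part $Q$ is a bounded quadratic form, and the cubic and quartic terms are smooth thanks to the continuous embeddings of $\mathcal W(\Omega)$ into $L_3(\Omega)$ and $L_4(\Omega)$ (Remark~\ref{compact_embedding_remark}). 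Since $dL[U]U=H[U]<0$, we have $dL[U]\neq0$ in $\mathcal W(\Omega)^*$, so near $U$ the set $\mathcal N(\Omega)=\{v\neq0:L[v]=0\}$ is a $C^1$ submanifold of codimension one. Because $\{H<0\}$ is open, $\mathcal N_-(\Omega)$ is relatively open in $\mathcal N(\Omega)$, hence a whole $\mathcal N(\Omega)$-neighbourhood of $U$ is contained in $\mathcal N_-(\Omega)\subset\mathcal N_-(\Omega)\cup\mathcal N_0(\Omega)$; therefore $U$ is a local minimizer of $E$ restricted to $\mathcal N(\Omega)$.

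At this stage the Lagrange multiplier rule gives $\lambda\in\mathbb R$ with $dE[U]=\lambda\,dL[U]$ in $\mathcal W(\Omega)^*$. Testing against $h=U$ yields $L[U]=dE[U]U=\lambda\,dL[U]U=\lambda\,H[U]$; since $L[U]=0$ and $H[U]<0$ this forces $\lambda=0$, so $dE[U]h=0$ for all $h\in\mathcal W(\Omega)$, which is precisely the integral identity \eqref{weak_SH_solution}. Thus $U$ is a weak solution of \eqref{Swift--Hohenberg-equation} with the natural boundary condition in \eqref{Navier-2}. As an alternative to the abstract multiplier rule, the same conclusion follows by a hands-on fibering argument: for fixed $h$, since $H[U]\neq0$ the implicit function theorem provides a $C^1$ function $s\mapsto t(s)$ with $t(0)=1$ and $t(s)(U+sh)\in\mathcal N_-(\Omega)$ for small $s$; differentiating $s\mapsto E\big(t(s)(U+sh)\big)\ge E[U]$ at $s=0$ and using $L[U]=0$ leaves exactly $dE[U]h=0$.

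The only real obstacle is the degeneracy issue. The minimizer $U$ comes from minimizing over $\mathcal N_-(\Omega)\cup\mathcal N_0(\Omega)$, and on the ``end of ridge'' $\mathcal N_0(\Omega)$ the constraint is degenerate along the fiber ($H=0$), exactly where the multiplier/fibering machinery would break down. It is Lemma~\ref{negative_curvature_lemma} — comparing the upper bound $E[U]\le S_3^6/(3\beta^2)$ obtained by testing on the $S_3$-extremal against the lower bound $E[U]\ge S_4^4/12$ that would hold if $U\in\mathcal N_0(\Omega)$, incompatible once $\beta>2S_3^3S_4^{-2}$ — that removes this, and it is the heart of the matter. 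Everything else (smoothness of $L$, relative openness of $\mathcal N_-(\Omega)$, the one-line multiplier computation) is routine.
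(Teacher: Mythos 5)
Your proof is correct and follows essentially the same route as the paper: invoke Lemma~\ref{negative_curvature_lemma} to get $H[U]<0$ so the inequality constraint is inactive, then apply the Lagrange multiplier rule and test with $h=U$ to conclude $\lambda\,H[U]=0$, hence $\lambda=0$. Your version is in fact slightly more careful than the paper's, since you explicitly verify the nondegeneracy $dL[U]\neq0$ (via $dL[U]U=H[U]<0$) and the relative openness of $\mathcal N_-(\Omega)$ needed to justify the multiplier rule, whereas the paper simply asserts the existence of multipliers for the two constraints $L[u]=0$, $H[u]\le0$.
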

   \begin{proof}
    As $U$ is the minimizer of $E[u]$ with constraints $L[u]=0$ and $H[u] \le 0$, there exist Lagrange multipliers $\lambda_1$ and $\lambda_2$ such that for all $h \in {\cal W}(\Omega)$ the identity
    \begin{equation}
     \label{Lagrange_identity}
     dE[U]h - \lambda_1 \cdot dL[U]h - \lambda_2 \cdot dH[U]h =0
    \end{equation}
    holds. By Lemma \ref{negative_curvature_lemma} condition $H[U] \le 0$ is inactive and thus $\lambda_2=0$.
    
    Next, substituting $h=U$ into \eqref{Lagrange_identity}, we obtain
    \begin{equation}
     0 = dE[U]U - \lambda_1 \cdot dL[U]U = L[U] - \lambda_1 \cdot H[U] = - \lambda_1 \cdot H[U].
    \end{equation}
    Since $H[U] < 0$, we obtain that $\lambda_1 = 0$ and thus $U$ is a critical point of $E[u]$ on ${\cal W}(\Omega)$.
   \end{proof}

   We call $U$ {\bf the ridge-Nehari solution} of \eqref{Swift--Hohenberg-equation}--\eqref{Navier-2}.

   \begin{remark}
   \label{b-bounded}
    The quantity $\beta_0$ in Theorem \ref{Nehari_solution_exists_theorem} depends on $\alpha$ and $\Omega$. However, Lemma \ref{Sobolev_constants_bounded_lemma} shows that, given $\Omega$, we have
    \begin{equation}
    \label{frak-b}
    \beta^*:=\sup\limits_{R\ge 1} \, \beta_0(\alpha,\Omega_R)<\infty.
    \end{equation}
   \end{remark}
   
  \subsection{Irreducibility of the ridge-Nehari solutions in $\Omega_R$ for large $R$}\label{ss:irred}

In this subsection, $\mathcal B_r$ stands for the open ball with radius $r$ centered in the origin.
  
   \begin{lemma}
    \label{lem:irreducibility_test_function}
       For any $\beta>2\sqrt{1-\alpha}$ there exists $r = r(\alpha,\beta)>0$ and a function $v \in \mathcal C_0^\infty(\mathcal B_r)$ such that the fibration $E[t v]$ is non-monotonous. 
   \end{lemma}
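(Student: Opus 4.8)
The plan is to produce an explicit (or semi-explicit) test function supported in a small ball and then use the characterization of non-monotonous fibrations from Lemma~\ref{uniformization_lemma}, part~1: a fibration $\varphi_v(t)=E[tv]$ with $\int_\Omega v^3\,dx>0$ is non-monotonous precisely when $I[v]>\frac2\beta$, i.e. when
\begin{equation}
\beta\,\int\limits_\Omega v^3\,dx > 2\,\Bigl(Q[v]\int\limits_\Omega v^4\,dx\Bigr)^{\frac12}.
\end{equation}
So the whole problem reduces to exhibiting, for a given $\beta>2\sqrt{1-\alpha}$, one admissible $v\in\mathcal C_0^\infty(\mathcal B_r)$ for which this strict inequality holds, with $r$ allowed to depend on $\alpha,\beta$. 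Equivalently, using the scale invariance of $I[v]$, it suffices to make $I[v]$ as close as we like to its supremum $S_2(\Omega)^{-1}$ (cf. Corollary~\ref{convex}), since $\beta>2\sqrt{1-\alpha}\ge 2S_2(\Omega)^{-1}$ would then finish it — but we must be slightly careful because $S_2\ge\sqrt{-\alpha}$ rather than $\sqrt{1-\alpha}$, so instead I would aim directly at the weaker target $I[v]>\tfrac2\beta$ using the hypothesis $\beta>2\sqrt{1-\alpha}$.

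First I would take a fixed bump $w\in\mathcal C_0^\infty(\mathcal B_1)$ with $w\ge 0$, $w\not\equiv0$, and rescale it: set $v(x)=w(x/\varrho)$ supported in $\mathcal B_\varrho$ with $\varrho\to0$. The point of the rescaling is that under $x\mapsto x/\varrho$ the leading contribution to $Q[v]=\int(\Delta v+v)^2 - \alpha\int v^2$ comes from the biharmonic-type term: $\int(\Delta v)^2\,dx = \varrho^{n-4}\int(\Delta w)^2$, $\int|\nabla v|^2 = \varrho^{n-2}\int|\nabla w|^2$, $\int v^2 = \varrho^n\int w^2$, while $\int v^3=\varrho^n\int w^3$ and $\int v^4=\varrho^n\int w^4$. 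Hence as $\varrho\to0$,
\begin{equation}
I[v]=\frac{\int v^3\,dx}{\bigl(Q[v]\int v^4\,dx\bigr)^{1/2}}
= \frac{\varrho^{n}\int w^3}{\bigl(\varrho^{n-4}(\int(\Delta w)^2+o(1))\cdot \varrho^{n}\int w^4\bigr)^{1/2}}
= \varrho^{2}\cdot\frac{\int w^3}{\bigl(\int(\Delta w)^2\int w^4\bigr)^{1/2}}\bigl(1+o(1)\bigr)\to 0.
\end{equation}
That goes the wrong way, so the scaling-to-zero idea must be reversed: I would instead \emph{stretch}, $v(x)=w(x/\varrho)$ with $\varrho\to\infty$, so that the $\int v^2$ term dominates $Q[v]$: $Q[v] = (-\alpha)\varrho^n\int w^2 \,(1+o(1))$ as $\varrho\to\infty$, and then
\begin{equation}
I[v] = \frac{\varrho^n\int w^3}{\bigl((-\alpha)\varrho^n\int w^2\,(1+o(1))\cdot\varrho^n\int w^4\bigr)^{1/2}}
=\frac{1}{\sqrt{-\alpha}}\cdot\frac{\int w^3}{\bigl(\int w^2\int w^4\bigr)^{1/2}}\,\bigl(1+o(1)\bigr).
\end{equation}
By Cauchy--Bunyakovsky the fraction $\int w^3/(\int w^2\int w^4)^{1/2}$ can be pushed arbitrarily close to $1$ by choosing $w$ to concentrate (e.g. a tall narrow bump), so $I[v]$ can be made arbitrarily close to $1/\sqrt{-\alpha}$. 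The catch is that $1/\sqrt{-\alpha}$ is only $\ge 1/\sqrt{1-\alpha}$, hence $\beta>2\sqrt{1-\alpha}$ does not immediately give $\beta I[v]/2>1$.

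To close this gap I would not pass to the limit $\varrho\to\infty$ but rather choose $\varrho$ \emph{finite}: with $\varrho$ moderately large the term $\int(\Delta v+v)^2=\int(\Delta v)^2 - 2\int|\nabla v|^2 + \int v^2$ contributes a genuinely \emph{negative} cross term $-2\int|\nabla v|^2=-2\varrho^{n-2}\int|\nabla w|^2$ which lowers $Q[v]$ below $(-\alpha)\int v^2$, and likewise the original target should use $S_2$, which by Remark~\ref{S_two_bounds} satisfies $S_2\le\sqrt{1-\alpha}$. The cleanest route, therefore, is: fix a minimizing sequence $v_k$ for $S_2=S_2(\mathcal B_1)$ among $v\in\mathcal C_0^\infty(\mathcal B_1)$ (density of $\mathcal C_0^\infty$ in $\mathcal W(\mathcal B_1)$ and the fact that the infimum defining $S_2$ can be approached by compactly supported functions — using $\beta>2\sqrt{1-\alpha}\ge 2S_2$ strictly — gives, for $k$ large, a $v=v_k$ with $Q[v]/\|v\|_{L_2}^2$ as close to $S_2^2\le 1-\alpha<\beta^2/4$ as desired, while simultaneously $\|v\|_{L_4}^2\le\|v\|_{L_2}\|v\|_{L_\infty}$ etc. can be controlled); then by Cauchy--Bunyakovsky $I[v]=\int v^3/(Q[v]\int v^4)^{1/2}\ge S_2^{-1}\cdot(\int v^2\int v^4)^{1/2}\big/(\int v^4)^{1/2}\cdot(\ldots)$ — more directly, $I[v]\to S_2^{-1}>2/\beta$ along the sequence because the near-optimizers of $S_2$ are also near-optimizers of Cauchy--Bunyakovsky. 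Finally rescale the spatial variable to move the support inside $\mathcal B_r$ for whatever $r$ is convenient; since $I$ is scale-invariant this does not disturb the strict inequality $I[v]>2/\beta$, and Lemma~\ref{uniformization_lemma}(1) then yields that $E[tv]$ is non-monotonous. The main obstacle is precisely the bookkeeping in the previous sentence: showing that near-minimizers of the Rayleigh-type quotient defining $S_2$ can be taken smooth, compactly supported in an arbitrarily small ball, \emph{and} have $\int v^3/(\int v^2\int v^4)^{1/2}$ close to $1$ simultaneously; this is where one genuinely uses that the optimal profile concentrates, and it is the only non-routine point — everything after it is the one-line application of Lemma~\ref{uniformization_lemma}.
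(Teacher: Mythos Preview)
Your stretching argument ($\varrho\to\infty$) is the right idea and is essentially the paper's proof, but a computational slip derails everything after it. The leading term of $Q[v]$ as $\varrho\to\infty$ is $(1-\alpha)\,\varrho^n\int w^2$, not $(-\alpha)\,\varrho^n\int w^2$: expanding $Q[v]=\int(\Delta v)^2-2\int|\nabla v|^2+(1-\alpha)\int v^2$ for compactly supported $v$, the last term dominates under stretching. With this correction,
\[
I[v]\;\longrightarrow\;(1-\alpha)^{-1/2}\cdot\frac{\int w^3}{\bigl(\int w^2\int w^4\bigr)^{1/2}}\qquad(\varrho\to\infty),
\]
and the target $(1-\alpha)^{-1/2}$ matches the hypothesis $\beta>2\sqrt{1-\alpha}$ \emph{exactly}: there is no ``catch'' to close. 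Taking $w$ close to an indicator (so the Cauchy--Bunyakovsky factor tends to $1$) and $\varrho$ large gives $I[v]>2/\beta$, and Lemma~\ref{uniformization_lemma}(1) finishes. The paper carries this out in one stroke: it takes $v$ to be a smooth plateau equal to $1$ on $\mathcal B_{r-1}$ and $0$ outside $\mathcal B_{r-1/2}$, computes directly $Q[v]=(1-\alpha)|\mathcal B_r|(1+O(r^{-1}))$ and $\int v^3=\int v^4=|\mathcal B_r|(1+O(r^{-1}))$, hence $I[v]=(1-\alpha)^{-1/2}(1+O(r^{-1}))>2/\beta$ for $r$ large. This single construction handles both limits (flat profile and large stretch) simultaneously.

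Your third paragraph, written to repair a gap that does not exist, contains two genuine errors. First, $I[v]$ is \emph{not} invariant under spatial rescaling $v(x)\mapsto v(\lambda x)$: the three terms of $Q[v]$ scale as $\lambda^{n-4}$, $\lambda^{n-2}$, $\lambda^{n}$, so you cannot freely ``move the support inside $\mathcal B_r$'' without changing $I$. Second, and for the same reason, functions in $\mathcal C_0^\infty(\mathcal B_r)$ with $r$ \emph{small} can never have $Q[v]/\|v\|_{L_2}^2$ near $1-\alpha$: the biharmonic term $\int(\Delta v)^2$ then dominates and the ratio blows up. The lemma lets you \emph{choose} $r=r(\alpha,\beta)$, and the correct choice --- as in the paper --- is $r$ large.
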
 
   \begin{proof}
    Consider a radial smooth function $v$ such that
    \begin{equation}
        v(x) = \begin{cases}
                1,& |x| \le r-1;\\
                0,& |x| \ge r - \frac 1 2.
               \end{cases}
    \end{equation}
    Then we have, as $r\to\infty$,
    \begin{equation}
    Q[v]=(1-\alpha)|\mathcal B_r| (1 + O(r^{-1}));\quad
    \int\limits_{\mathcal B_r} v^3 \,dx = |\mathcal B_r| (1 + O(r^{-1}));\quad 
    \int\limits_{\mathcal B_r} v^4 \,dx = |\mathcal B_r| (1 + O(r^{-1})), 
    \end{equation}
    and therefore, if $r$ is large enough,
    \begin{equation}
     I[v] = (1-\alpha)^{-\frac 1 2} (1 + O(r^{-1}))>\frac 2\beta.  
    \end{equation}
    Now the statement follows from part 1 of Lemma \ref{uniformization_lemma}.
   \end{proof}

Without loss of generality we can assume that $0 \in \Omega$. As we consider expanding domains, we can assume also that $\mathcal B_1\subset \Omega$.
   
   \begin{lemma}
    \label{lem:irreducibility_upper_bound}
    Let $\beta > \beta^*$, where $\beta^*$ was introduced in \eqref{frak-b}. 
    Let $U_R$ be the ridge-Nehari solution in $\Omega_R$. Then the energy $E[U_R]$ is uniformly bounded with respect to $R\ge1$. 
   \end{lemma}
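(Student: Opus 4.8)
The plan is to use the variational characterization of $U_R$ as a minimizer of $E$ over $\mathcal N_-(\Omega_R)\cup\mathcal N_0(\Omega_R)$ together with the scale-invariant test function from Lemma \ref{lem:irreducibility_test_function}. First I would fix $\beta>\beta^*$; by Remark \ref{b-bounded} this guarantees $\beta>\beta_0(\alpha,\Omega_R)$ for every $R\ge1$, so Theorem \ref{Nehari_solution_exists_theorem} applies uniformly and $U_R$ exists. Since $\beta>\beta^*\ge 2\sqrt{1-\alpha}$, Lemma \ref{lem:irreducibility_test_function} furnishes a radius $r=r(\alpha,\beta)$ and a function $v\in\mathcal C_0^\infty(\mathcal B_r)$, independent of $R$, whose fibration $E[tv]$ is non-monotonous; in particular $I[v]>\tfrac2\beta$, i.e. $\beta>2(I[v])^{-1}$.

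Next I would feed $v$ into Lemma \ref{uniformization_lemma}: parts 2 and 3 give a scaling factor $\tilde t=\tilde t(v,\beta)$ with $\tilde t v\in\mathcal N_-(\Omega_R)$ (this is legitimate because $\mathcal B_r\subset\Omega_R$ for all $R\ge r$, and $v$ extended by zero lies in $\mathcal W(\Omega_R)$), and the explicit energy formula
\begin{equation}
E[\tilde t v]=\frac{1}{3\beta^2}\cdot\frac{(Q[v])^3}{\Big(\int_{\mathcal B_r} v^3\,dx\Big)^2}\cdot f\!\Big(\frac{\beta I[v]}{2}\Big).
\end{equation}
The crucial observation is that every quantity on the right-hand side depends only on $v$, $\alpha$ and $\beta$ — not on $R$ — because $v$ is supported in the fixed ball $\mathcal B_r$ and the functional $Q$, together with the integrals of $v^3$ and $v^4$, are computed over that fixed region regardless of the ambient domain. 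Since $f$ is bounded (it decreases on $[1,\infty)$ with $f(1)=1$), $E[\tilde t v]$ is bounded by a constant $C_0=C_0(\alpha,\beta,v)$ independent of $R$.

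Finally, because $U_R$ minimizes $E$ over $\mathcal N_-(\Omega_R)\cup\mathcal N_0(\Omega_R)$ and $\tilde t v$ belongs to $\mathcal N_-(\Omega_R)$, we conclude $E[U_R]\le E[\tilde t v]\le C_0$ for all $R\ge\max\{1,r\}$; for the finitely many remaining values (in fact $R\in[1,r]$, though one may simply note $\mathcal B_1\subset\Omega$ was assumed and rescale, or enlarge $r$ so $r\le1$ is impossible) one uses the same $v$ or adjusts $r$ upward, which only weakens the bound. This gives the claimed uniform bound. I do not anticipate a genuine obstacle here: the only point requiring a little care is making sure the test function's support sits inside $\Omega_R$ for the range of $R$ under consideration, which is why one takes $R$ past the threshold where $\mathcal B_r\subset\Omega_R$ — and since we are interested in the asymptotics $R\to\infty$ this is harmless. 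The real content is simply the scale-invariance of the numerator and denominator in the energy formula once the test function is frozen.
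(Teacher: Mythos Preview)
Your proposal is correct and follows essentially the same route as the paper: use the compactly supported test function $v$ from Lemma~\ref{lem:irreducibility_test_function}, scale it onto $\mathcal N_-(\Omega_R)$ via Lemma~\ref{uniformization_lemma}, and observe that $E[\tilde t v]$ is computed entirely over the fixed ball $\mathcal B_r$ and hence independent of $R$, giving $E[U_R]\le E[\tilde t v]$. The paper's proof is terser and simply restricts to $R>r$ without fussing over the remaining bounded range; your extra care there is harmless but not really needed for the intended application (the asymptotics as $R\to\infty$).
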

   
   \begin{proof}
    For $R > r$ the ball $\mathcal B_r$ lies inside $ \Omega_R$. Consider the test function $v \in \mathcal C_0^\infty(\mathcal B_r)$ introduced in Lemma \ref{lem:irreducibility_test_function}. By Lemma \ref{uniformization_lemma} there exists $\tilde t(\beta) > 0$ such that $\tilde t(\beta) v \in \mathcal N_-(\Omega_R)$. Obviously, $E[\tilde t(\beta)v]$ does not depend on $R$. As $E[U_R] \le E[\tilde t(\beta) v]$, the claim follows.
   \end{proof}

   \begin{corollary}
   Under assumptions of Lemma \ref{lem:irreducibility_upper_bound},
   there exists $R_0 = R_0(\alpha, \beta,\Omega)$ such that the ridge-Nehari solution is not constant for $R > R_0$.   
   \end{corollary}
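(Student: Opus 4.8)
The plan is to compare $E[U_R]$ with the energy of the constant solution $c_-$ and to exploit that the latter grows like the volume of $\Omega_R$ while the former stays bounded. First I would pin down which constant $U_R$ could possibly be. Since $\beta>\beta^*\ge\beta_0\ge 2\sqrt{1-\alpha}$, the constants $c_\pm$ exist and solve \eqref{c_pm_equation}. The constant $0$ does not belong to $\mathcal N(\Omega_R)$, and $c_+\in\mathcal N_+(\Omega_R)$, which is disjoint from $\mathcal N_-(\Omega_R)\cup\mathcal N_0(\Omega_R)$; hence the only nonzero constant lying in $\mathcal N_-(\Omega_R)\cup\mathcal N_0(\Omega_R)$ is $c_-$. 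Consequently, if the ridge-Nehari solution $U_R$ (which by construction belongs to $\mathcal N_-(\Omega_R)\cup\mathcal N_0(\Omega_R)$) is constant, then necessarily $U_R\equiv c_-$.

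Next I would bound $E[c_-]$ from below. Because $c_-$ is constant, $\Delta c_-=0$, so $Q[c_-]=\int_{\Omega_R}\big((\Delta c_-+c_-)^2-\alpha c_-^2\big)\,dx=(1-\alpha)c_-^2\,|\Omega_R|=(1-\alpha)c_-^2\,R^n|\Omega|$. Since $c_-\in\mathcal N_-(\Omega_R)$, part 2 of Lemma \ref{coercitivity_at_zero_lemma} applied with $t=1$ gives $E[c_-]\ge\frac1{12}Q[c_-]=\frac{(1-\alpha)c_-^2}{12}\,R^n|\Omega|$. The crucial point is that $c_-=c_-(\alpha,\beta)$ does not depend on $R$, so the right-hand side tends to $+\infty$ as $R\to\infty$.

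Finally, I would invoke Lemma \ref{lem:irreducibility_upper_bound}: there is a constant $M=M(\alpha,\beta,\Omega)$ such that $E[U_R]\le M$ for all $R\ge1$. Choosing $R_0=R_0(\alpha,\beta,\Omega)$ so large that $\frac{(1-\alpha)c_-^2}{12}\,R_0^n|\Omega|>M$, we obtain $E[c_-]>M\ge E[U_R]$ for every $R>R_0$, hence $U_R\not\equiv c_-$, and therefore $U_R$ is not constant for $R>R_0$.

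I do not expect a genuine obstacle here; the argument is a short volume-versus-bounded-energy comparison. The only points needing care are the hypothesis bookkeeping guaranteeing that $c_\pm$ are well defined and that $c_-\in\mathcal N_-$ (so that it is an admissible competitor and the lower bound of Lemma \ref{coercitivity_at_zero_lemma} applies), together with the observation that the lower estimate for $E[c_-]$ is genuinely proportional to $|\Omega_R|=R^n|\Omega|$.
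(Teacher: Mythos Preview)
Your argument is correct and follows the same overall scheme as the paper: bound $E[U_R]$ from above uniformly in $R$ by Lemma~\ref{lem:irreducibility_upper_bound}, and bound $E[c_-]$ from below by a quantity proportional to $|\Omega_R|=R^n|\Omega|$, forcing $U_R\neq c_-$ for large $R$. You also make explicit the (implicit in the paper) step that the only constant in $\mathcal N_-(\Omega_R)\cup\mathcal N_0(\Omega_R)$ is $c_-$.

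The only genuine difference is the tool used for the lower bound on $E[c_-]$. The paper invokes the explicit ridge-energy formula of Lemma~\ref{uniformization_lemma}, obtaining
\[
E[c_-]=\frac{(1-\alpha)^3}{3\beta^2}\,|\Omega_R|\cdot f\!\Big(\tfrac{\beta I[c_-]}{2}\Big)\ge \frac{(1-\alpha)^3}{6\beta^2}\,R^n|\Omega|,
\]
whereas you use the coercivity estimate of Lemma~\ref{coercitivity_at_zero_lemma}(2) to get $E[c_-]\ge\frac1{12}Q[c_-]=\frac{(1-\alpha)c_-^2}{12}\,R^n|\Omega|$. Both yield the needed $R^n$-growth; your route is slightly more elementary (it avoids the uniformization formula and the properties of $f$), while the paper's route gives a constant expressed purely in terms of $\alpha$ and $\beta$ without reference to $c_-$. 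Either way the proof goes through.
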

   \begin{proof}
    By Lemma \ref{uniformization_lemma}
    \begin{equation}
     E[c_-] = \frac 1 {3 \beta^2} \frac {(1-\alpha)^3|\Omega_R|^3}{|\Omega_R|^2} \cdot f\bigg(\frac {\beta I[c_-]} 2 \bigg) \ge \frac {(1-\alpha)^3}{6 \beta^2}\, |\Omega| R^n.   
    \end{equation}   
    However, by Lemma \ref{lem:irreducibility_upper_bound} $E[U_R] \le C(\alpha,\beta)$, and the statement follows.
   \end{proof}

   \begin{lemma}
    \label{lem:irreducibility_lower_bound}
    Let $\Omega=\omega\times\omega'$ be a Carthesian product of two domains $\omega \subset \mathbb{R}^k$ and $\omega' \subset \mathbb R^{n-k}$. 
    Let $C_0 > 0$ and let $u_R$ be arbitrary weak solution of the boundary value problem \eqref{Swift--Hohenberg-equation}--\eqref{Navier-2} in $\Omega_R$ such that $E[u_R] \le C_0$.
    If $u_R$ does not depend on $x_1, \dots, x_k$ then there exists $R_1=R_1(\alpha, \beta, C_0, \omega, \omega')$ such that $u_R \not \in \mathcal N_-(\Omega_R)$ for $R>R_0$.
   \end{lemma}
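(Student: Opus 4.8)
The plan is to argue by contradiction and to exploit the product structure of $\Omega$. Suppose $u_R\in\mathcal N_-(\Omega_R)$. Split the variables as $x=(x'',x')$ with $x''=(x_1,\dots,x_k)$, $x'=(x_{k+1},\dots,x_n)$, so that $\Omega_R=\omega_R\times\omega'_R$ with $\omega_R=R\omega$, $\omega'_R=R\omega'$ and $|\omega_R|=R^k|\omega|$; by hypothesis $u_R(x)=w_R(x')$. Since $\omega'$ is a bounded convex polyhedron (a section of the convex polyhedron $\Omega$ by a coordinate affine subspace) and $n-k\le n\le 7$, the whole machinery of Sections~\ref{S:aux}--\ref{S:variational_solutions} applies to the half-Neumann problem in $\omega'_R\subset\mathbb R^{n-k}$; denote its functionals by $\widetilde E,\widetilde Q,\widetilde L,\widetilde H$, its Sobolev constants by $\widetilde S_m(\omega'_R)$, and its Nehari sets by $\widetilde{\mathcal N}(\omega'_R),\widetilde{\mathcal N}_-(\omega'_R),\widetilde{\mathcal N}_0(\omega'_R)$. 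First I would check that $w_R$ lies in the corresponding energy space: $w_R\in W^2_2(\omega'_R)$ because $\|u_R\|^2_{W^2_2(\Omega_R)}=|\omega_R|\,\|w_R\|^2_{W^2_2(\omega'_R)}$, and the homogeneous Neumann condition for $w_R$ on $\partial\omega'_R$ follows from that for $u_R$ (on the faces $\partial\omega_R\times\overline{\omega'_R}$ it is automatic since $u_R$ is $x''$-independent, and on $\overline{\omega_R}\times\partial\omega'_R$ it is exactly the condition for $w_R$). As $\Delta u_R=\Delta_{x'} w_R$ and each integrand then depends on $x'$ only, Fubini gives
\begin{equation}
 E[u_R]=|\omega_R|\,\widetilde E[w_R],\qquad Q[u_R]=|\omega_R|\,\widetilde Q[w_R],\qquad L[u_R]=|\omega_R|\,\widetilde L[w_R],\qquad H[u_R]=|\omega_R|\,\widetilde H[w_R];
\end{equation}
in particular $u_R\in\mathcal N_-(\Omega_R)$ forces $w_R\in\widetilde{\mathcal N}_-(\omega'_R)$.

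The heart of the argument is a lower bound for $\widetilde E[w_R]$ that is uniform in $R$. From $w_R\in\widetilde{\mathcal N}_-(\omega'_R)\subset\widetilde{\mathcal N}(\omega'_R)$, part~1 of Lemma~\ref{Nehari_sets_away_from_zero_lemma} and the definition of $\widetilde S_4(\omega'_R)$ give
\begin{equation}
 \widetilde Q[w_R] \ge \widetilde S_4(\omega'_R)^2\,\|w_R\|_{L_4(\omega'_R)}^2 \ge \frac{\widetilde S_2(\omega'_R)^2\,\widetilde S_4(\omega'_R)^4}{\beta^2}.
\end{equation}
By Lemma~\ref{Sobolev_constants_bounded_lemma}, applied to the dilation family $\{\omega'_R\}_{R\ge1}$, there is $c_*=c_*(\alpha,\omega')>0$ with $\widetilde S_2(\omega'_R),\widetilde S_4(\omega'_R)\ge c_*$ for all $R\ge1$, hence $\widetilde Q[w_R]\ge c_*^{6}\beta^{-2}$. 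Finally, since $w_R\in\widetilde{\mathcal N}_-(\omega'_R)\cup\widetilde{\mathcal N}_0(\omega'_R)$, part~2 of Lemma~\ref{coercitivity_at_zero_lemma} at $t=1$ yields $\widetilde E[w_R]\ge\tfrac1{12}\widetilde Q[w_R]\ge c_*^{6}/(12\beta^2)$.

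Combining these, $E[u_R]=|\omega_R|\,\widetilde E[w_R]\ge R^k\,|\omega|\,c_*^{6}/(12\beta^2)\to\infty$ as $R\to\infty$, which contradicts $E[u_R]\le C_0$ as soon as $R>R_1$ for an explicit $R_1=R_1(\alpha,\beta,C_0,\omega,\omega')$; thus $u_R\notin\mathcal N_-(\Omega_R)$ for $R>R_1$. The only degenerate case, $k=n$ (so $u_R$ is a nonzero constant and the above reduction is vacuous), is handled directly: then $u_R\in\mathcal N_-(\Omega_R)$ forces $u_R\equiv c_-$, and $E[c_-]$ grows like $R^n$ exactly as in the proof of the Corollary preceding this Lemma, again contradicting $E[u_R]\le C_0$ for large $R$.

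I do not anticipate a substantive obstacle: the mechanism is simply that any $x''$-independent competitor costs energy proportional to the $k$-dimensional volume $R^k|\omega|$, which is unbounded. The one point that genuinely matters is that the lower-dimensional Sobolev constants $\widetilde S_2(\omega'_R),\widetilde S_4(\omega'_R)$ must not degenerate as $R\to\infty$ — otherwise the argument would only give $\widetilde E[w_R]\ge0$ — and this is precisely the content of Lemma~\ref{Sobolev_constants_bounded_lemma}, because $\{\omega'_R\}$ is again a dilation family of a fixed bounded convex polyhedron. (Note that the hypothesis that $u_R$ solves the equation is not actually used; only $u_R\in{\cal W}(\Omega_R)$, the bound $E[u_R]\le C_0$, membership in $\mathcal N_-$, and $x''$-independence enter.)
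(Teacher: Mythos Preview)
Your proof is correct and follows essentially the same route as the paper: reduce to the profile $w_R$ on $\omega'_R$, observe that the functionals factor as $|\omega_R|$ times their lower-dimensional counterparts so that $w_R\in\widetilde{\mathcal N}_-(\omega'_R)$, then combine Lemma~\ref{Nehari_sets_away_from_zero_lemma}, Lemma~\ref{coercitivity_at_zero_lemma} (part~2 at $t=1$), and the uniform lower bounds on $\widetilde S_2,\widetilde S_4$ from Lemma~\ref{Sobolev_constants_bounded_lemma} to get $\widetilde E[w_R]\ge C(\alpha,\omega')/\beta^2$, contradicting $E[u_R]\le C_0$ for large $R$. Your additional remarks---that $\omega'$ inherits the convex-polyhedron structure so the auxiliary lemmata apply, and that the weak-solution hypothesis on $u_R$ is never actually used---are correct and slightly sharpen the paper's presentation; the separate treatment of $k=n$ is harmless but unnecessary since the statement implicitly has $1\le k<n$.
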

   
   \begin{proof}
    We have
    \begin{equation}
     u_R(x_1,x_2, \ldots, x_n) = \mathfrak u_R(x_{k+1}, \ldots, x_n).
    \end{equation}
    Then the profile function $\mathfrak u_R$ is a weak solution of \eqref{Swift--Hohenberg-equation}--\eqref{Navier-2} in $\omega'_R$ and therefore lies in Nehari manifold $\mathcal N(\omega'_R)$. By Lemma \ref{Nehari_sets_away_from_zero_lemma}, we have $\|\mathfrak u_R\|_{L_4} \ge \frac{S_2(\omega'_R) S_4(\omega'_R)}{\beta}$. 

    If $u_R \in \mathcal N_-(\Omega_R)$ then 
    \begin{equation}
    H[u_R]<0 \quad\Longrightarrow\quad H[\mathfrak u_R]=\frac {H[u_R]}{|\omega|R^k}<0,
    \end{equation}
    that is, $\mathfrak u_R \in \mathcal N_-(\omega'_R)$. So, Lemmata \ref{coercitivity_at_zero_lemma} and \ref{Sobolev_constants_bounded_lemma} imply
    \begin{equation}
     E[\mathfrak u_R]\ge \frac 1{12}\, Q[\mathfrak u_R]\ge \frac {S^2_4(\omega'_R)}{12}\, \|\mathfrak u_R\|^2_{L_4} \ge \frac{S^2_2(\omega'_R) S^4_4(\omega'_R)}{12\beta^2}\ge \frac{C(\alpha,\omega')}{\beta^2}, \qquad R\ge1.
    \end{equation}
    On the other hand,
    \begin{equation}
     E[\mathfrak u_R] = \frac {E[u_R]} {|\omega|R^k}  \le \frac {C_0}{|\omega|R^k},
    \end{equation}
    and we reach a contradiction for large $R$.
   \end{proof}
   
   Lemmata \ref{lem:irreducibility_upper_bound} and \ref{lem:irreducibility_lower_bound} demonstrate that the ridge-Nehari solutions of \eqref{Swift--Hohenberg-equation}--\eqref{Navier-2} in $\Omega_R$ essentially depend on all variables if $R$ is large enough.

\begin{remark}
    Notice that the quantity $\beta^*$ is always greater than $2$. We conjecture that for small $|\alpha|$ the assumption $\beta>\beta^*$ is too strong, and that for large $R$ the ridge-Nehari solutions in $\Omega_R$ exist and are irreducible for $\beta>{\rm const}\cdot\sqrt{-\alpha}$.
\end{remark}

 \section{Construction of entire solutions with special symmetries}
  \label{S:periodic_solutions}

  The proof of the following lemma is standard. We give it for the reader's convenience.
 
  \begin{lemma}
   \label{lem:reflection_extension}
   Let $\Omega$ have a face $F$, let $\mathcal R$ be a reflection with respect to $F$, and let $u$ be a weak solution of the boundary value problem \eqref{Swift--Hohenberg-equation}--\eqref{Navier-2} in $\Omega$. Consider the even extension
   \begin{equation}
    \widetilde u(x) = \begin{cases}
            u(x), x \in \Omega;\\
            u(\mathcal Rx), x \in \mathcal R\Omega.
           \end{cases}   
   \end{equation}
   Then $\widetilde u$ is a weak solution of \eqref{Swift--Hohenberg-equation}--\eqref{Navier-2} in the ``doubled'' domain $\widetilde\Omega=\Omega \cup F\cup \mathcal R \Omega$.
  \end{lemma}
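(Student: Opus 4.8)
The plan is to verify directly that $\widetilde u$ satisfies the integral identity \eqref{weak_SH_solution} in $\widetilde\Omega$, using a test function $h\in\mathcal W(\widetilde\Omega)$ and splitting the integral over $\widetilde\Omega$ into the pieces over $\Omega$ and over $\mathcal R\Omega$. First I would record the elementary observation that the even extension $\widetilde u$ lies in $W^2_2(\widetilde\Omega)$: across the common face $F$ the traces of $u$ from the two sides agree by construction, and the traces of $\partial u/\partial\mathbf n$ agree because both vanish (this is precisely the half-Neumann condition on $F$); one should also check that the second-order tangential and the mixed normal-tangential derivatives match, which again follows from $\partial u/\partial\mathbf n=0$ on $F$ together with the fact that $F$ is flat, so that reflection commutes with tangential differentiation. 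Hence no distributional jump terms appear on $F$, and $\widetilde u\in W^2_2(\widetilde\Omega)$; moreover $\partial\widetilde u/\partial\mathbf n=0$ on $\partial\widetilde\Omega=(\partial\Omega\setminus F)\cup(\mathcal R\partial\Omega\setminus F)$ by the same reflection symmetry, so $\widetilde u\in\mathcal W(\widetilde\Omega)$.

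Next, given $h\in\mathcal W(\widetilde\Omega)$, decompose $h=h_e+h_o$ into its even and odd parts with respect to $\mathcal R$. Since the integrand in $dE[\widetilde u]h$ is the product of $h$ (or its derivatives) with quantities built from $\widetilde u$, all of which are even under $\mathcal R$, the contribution of the odd part $h_o$ cancels between $\Omega$ and $\mathcal R\Omega$; thus it suffices to treat even $h$. For even $h$, both $h|_\Omega$ and $h|_{\mathcal R\Omega}$ are admissible test functions on $\Omega$ (after pulling back by $\mathcal R$), and by the change of variables $x\mapsto\mathcal Rx$ — which is an isometry, so it preserves $\Delta$, $\nabla$ in the relevant quadratic-form sense, and the volume element — the integral over $\mathcal R\Omega$ equals the integral over $\Omega$ with test function $h\circ\mathcal R|_\Omega=h|_\Omega$ (using evenness). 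Therefore
\begin{equation}
 dE[\widetilde u]h=\int\limits_\Omega[\cdots]\,dx+\int\limits_{\mathcal R\Omega}[\cdots]\,dx=2\int\limits_\Omega\big[\Delta u\,\Delta h-2(\nabla u,\nabla h)+(1-\alpha)uh-\beta u^2h+u^3h\big]\,dx=0,
\end{equation}
where the last equality is \eqref{weak_SH_solution} for $u$ in $\Omega$, valid because the even test function $h|_\Omega$ belongs to $\mathcal W(\Omega)$. Hence $\widetilde u$ is a weak solution in $\widetilde\Omega$.

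The only genuinely delicate point is the regularity claim $\widetilde u\in W^2_2(\widetilde\Omega)$, i.e. that the even reflection does not create a singular second derivative supported on $F$: this is exactly where the second condition in \eqref{Navier-2} — built into membership in $\mathcal W(\Omega)$ as a natural boundary condition — is used, since the normal derivative $\partial u/\partial\mathbf n$ of an even extension is odd and would jump across $F$ unless it vanishes there. I expect to handle this by working in coordinates in which $F$ lies in a hyperplane $\{x_n=0\}$, writing the reflection as $x_n\mapsto-x_n$, and checking that $\partial_n\widetilde u$ is continuous across $\{x_n=0\}$ (it equals $0$ on $F$ from both sides) while $\partial_n^2\widetilde u$, $\partial_i\partial_j\widetilde u$ ($i,j<n$) and $\partial_i\partial_n\widetilde u$ have matching one-sided traces; this yields $\widetilde u\in W^2_2$ with no concentrated mass on $F$. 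Everything else is the routine symmetry/change-of-variables bookkeeping sketched above.
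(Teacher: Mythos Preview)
Your proof is correct and follows essentially the same route as the paper's: show $\widetilde u\in\mathcal W(\widetilde\Omega)$ via the vanishing normal derivative on $F$, then split an arbitrary test function into even and odd parts with respect to $\mathcal R$, kill the odd contribution by symmetry, and reduce the even contribution to the weak formulation for $u$ on $\Omega$. Your write-up is in fact more explicit than the paper's (in particular you note why the even part $h_e|_\Omega$ lies in $\mathcal W(\Omega)$, which the paper leaves implicit). One small slip: in your last paragraph you attribute the $W^2_2$-regularity of the extension to ``the second condition in \eqref{Navier-2}'', but what you actually use---and what suffices---is the \emph{first} condition $\partial u/\partial\mathbf n=0$ on $F$; the second condition $\partial(\Delta u)/\partial\mathbf n=0$ is the natural boundary condition and plays no role in gluing $\widetilde u$ into $W^2_2$.
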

  \begin{proof}
  First, the boundary condition $\frac {\partial u}{\partial \bf{n}} = 0$ on $F$ implies $\widetilde u\in {\cal W}(\widetilde\Omega)$. Next, consider a test function $h \in {\cal W}(\widetilde\Omega)$. It can be represented as
   \begin{equation}
    h(x) = h_1(x)+h_2(x):=\frac {h(x) + h(\mathcal R x)} 2 + \frac {h(x) - h(\mathcal R x)} 2.   
   \end{equation}
   Then $h_1(x)=h_1(\mathcal R x)$, and the integral identity \eqref{weak_SH_solution} immediately gives $dE[\widetilde u]h_1 =0$. The equality $dE[\widetilde u]h_2 =0$ holds because all terms are odd with respect to $F$. So, $dE[\widetilde u]h=0$.
   \end{proof}

  Now we assume that the polyhedron $\Omega$ has the following property: the space $\mathbb R^n$ can be filled with reflections of $\Omega$, colored checkerwise. Following \cite{NSch}, we call such a polyhedron the {\bf fundamental domain}.\footnote{Fundamental domains in $\R^2$ are rectangles, equilateral triangles, isosceles right triangles and right triangles with acute angle $\frac \pi 6$. On the other hand, the right hexagon is not a fundamental domain since the hexagonal tiling of the plane cannot be colored checkerwise. In dimension $n\ge2$, for instance, we can take as $\Omega$ the Cartesian product of fundamental domains in spaces of lower dimensions.}

  \begin{theorem}
   Let $\Omega$ be a fundamental domain in $\R^n$, $n\leq 7$. Let $u$ be a weak solution of the boundary value problem \eqref{Swift--Hohenberg-equation}--\eqref{Navier-2} in $\Omega$. Then repeated even extension of $u$ gives a function $\bf u$ in $\R^n$, and $\bf u$ is a classical solution of \eqref{Swift--Hohenberg-equation}.
  \end{theorem}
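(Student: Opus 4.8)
The plan is to iterate the doubling construction of Lemma \ref{lem:reflection_extension} over the tiling of $\R^n$ by reflected copies of $\Omega$, and then upgrade the resulting weak solution to a classical one by elliptic regularity and a bootstrap argument. First I would fix the tiling: since $\Omega$ is a fundamental domain, $\R^n=\bigcup_{g\in G} g\Omega$ where $G$ is the group generated by reflections in the faces of $\Omega$, the copies have pairwise disjoint interiors, and they can be $2$-colored so that adjacent tiles carry opposite colors. The repeated even extension $\mathbf u$ is then unambiguously defined: on a tile $g\Omega$ set $\mathbf u(x)=u(g^{-1}x)$ for the unique shortest-word representative $g$ (the checkerboard colouring guarantees that the two ways of crossing a codimension-$2$ face agree, because a reflection composed with a reflection in an adjacent face is a rotation that preserves the colouring, so the local picture is consistent). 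Alternatively, and more cleanly, I would build $\mathbf u$ inductively on an exhausting sequence of finite unions of tiles: at each step the current domain $\Omega_k$ is a polyhedron, one adds a new tile across a face $F$, and Lemma \ref{lem:reflection_extension} (applied with $\Omega=\Omega_k$, which has $F$ as a face) shows the extension across $F$ is again a weak solution of \eqref{Swift--Hohenberg-equation}--\eqref{Navier-2}. Taking $k\to\infty$ and using that weak solutions on an increasing union are detected by test functions of compact support, $\mathbf u$ is a weak solution of \eqref{Swift--Hohenberg-equation} on all of $\R^n$ (there is no boundary condition left, $\partial\R^n=\varnothing$).

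Next I would promote $\mathbf u$ to a classical solution. The key point is that although $\mathbf u$ is only defined tile-by-tile via reflections, the half-Neumann boundary conditions \eqref{Navier-2} are exactly the ones that make the even reflection smooth \emph{across each face}: if $u$ satisfies $\partial_{\mathbf n}u=\partial_{\mathbf n}(\Delta u)=0$ on $F$, then the reflected function has vanishing odd-order normal derivatives up to order $3$ at $F$, which is what is needed for a $W^4_2$ (hence, after bootstrap, $C^\infty$) match. Concretely, $\mathbf u\in W^2_{2,\mathrm{loc}}(\R^n)$ and is a weak solution of the semilinear equation $(\Delta+1)^2\mathbf u=\alpha\mathbf u+\beta\mathbf u^2-\mathbf u^3$. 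Since $n\le 7$, $W^2_2\hookrightarrow L_q$ for all $q\le 2n/(n-4)$ (and into $C^0$ for $n\le 7$ once one bootstraps, but even the first step suffices): the right-hand side lies in $L^p_{\mathrm{loc}}$ for some $p>1$, so interior $L^p$ estimates for the fourth-order operator $(\Delta+1)^2$ give $\mathbf u\in W^4_{p,\mathrm{loc}}$. Iterating the bootstrap (each gain in integrability/differentiability of $\mathbf u$ improves the polynomial nonlinearity) yields $\mathbf u\in W^4_{p,\mathrm{loc}}$ for all $p<\infty$, hence $\mathbf u\in C^{3,\gamma}_{\mathrm{loc}}$; then the right-hand side is $C^{1,\gamma}$ and Schauder estimates give $\mathbf u\in C^{5,\gamma}_{\mathrm{loc}}$, and continuing, $\mathbf u\in C^\infty(\R^n)$. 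In particular $\mathbf u$ satisfies \eqref{Swift--Hohenberg-equation} pointwise.

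The main obstacle — really the only nontrivial point — is verifying that $\mathbf u$ is genuinely a weak solution \emph{across the codimension-$\ge 2$ faces}, i.e.\ at the edges and corners of the tiling where more than two tiles meet, rather than only across the open faces where Lemma \ref{lem:reflection_extension} applies directly. This is handled by the inductive exhaustion: at each stage one only ever reflects a polyhedron across one of \emph{its} faces, so Lemma \ref{lem:reflection_extension} applies verbatim; the edges and corners are lower-dimensional and carry zero $W^2_2$-capacity, so they do not obstruct the integral identity \eqref{weak_SH_solution}, and the consistency of the definition around an edge is exactly the checkerboard-colouring hypothesis built into the notion of fundamental domain. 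Once the weak solution is established globally, the regularity bootstrap is routine and uses only interior estimates, so no further care near the old faces is needed — the smoothness of $\mathbf u$ there is a consequence, not a hypothesis, of the boundary conditions \eqref{Navier-2} having been imposed on $u$.
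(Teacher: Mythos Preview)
Your proposal is correct and follows essentially the same approach as the paper: iterate Lemma~\ref{lem:reflection_extension} over the tiling to obtain a global weak solution, then invoke elliptic regularity to upgrade it to a classical one. The paper's own proof is just two sentences to this effect, so your write-up simply fleshes out the details (the inductive exhaustion, the handling of lower-dimensional faces, and the bootstrap) that the paper leaves implicit.
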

  \begin{proof}
   By Lemma \ref{lem:reflection_extension}, $\bf u$ is a generalized solution of \eqref{Swift--Hohenberg-equation} in $\R^n$. By classical elliptic regularity theory, it is a classical solution.
  \end{proof}

Next, we can begin with the ridge-Nehari solution of \eqref{Swift--Hohenberg-equation}--\eqref{Navier-2} in a fundamental domain $\Omega_R$ instead of $\Omega$. As is shown in Subsection~\ref{ss:irred}, for $R$ large enough we obtain nontrivial (depending on all variables) solutions. We stress that solutions for different (sufficiently large) $R$ cannot be obtained from each other by coordinate dilation and multiplying by a constant (since the equation \eqref{Swift--Hohenberg-equation} is not invariant with respect to such
transformations).

In general, different domains $\Omega$ can give the same solution. For instance, we conjecture that solutions in $\mathbb R^2$ generated by isosceles right triangle coincide with corresponding solutions generated by the square. However, beginning with the ridge-Nehari solutions in rectangles $R\times a R$ with mutually incommensurable aspect ratios $a \ge 1$, we obviously obtain different solutions in $\mathbb R^2$. Solutions with a different periodic structure are generated by equilateral triangles.

Different nontrivial solutions in $\mathbb R^3$ can be obtained from ridge-Nehari solutions in right parallelepipeds with various ratios of sides and in right triangular prisms. Similarly one can consider the cases $4\le n \le 7$.

 \section{Skew-periodic solutions}
  \label{S:skew_periodic_solutions}
 
  Let $\Omega \subset {\mathbb R}^2$ be a parallelogram spanned by vectors $h_1, h_2$. Consider the periodic conditions
  \begin{equation}
   \label{periodic_boundary_conditions}
   u(x+h_k) = u(x), \quad \frac{\partial u}{\partial {\bf n}}(x+h_k) = \frac{\partial u}{\partial {\bf n}}(x),  \quad k=1,2,
  \end{equation}
  and the space ${\cal W}_P \subset W_2^2(\Omega)$ of functions satisfying \eqref{periodic_boundary_conditions}. Stationary points of energy functional \eqref{energy-functional} satisfy the integral identity \eqref{weak_SH_solution} with test functions in ${\cal W}_P$ and are weak solutions of \eqref{Swift--Hohenberg-equation} with periodic boundary conditions, comprised of conditions \eqref{periodic_boundary_conditions} and (periodic) natural boundary conditions. Notice that any weak solution of \eqref{Swift--Hohenberg-equation} -- \eqref{periodic_boundary_conditions} can be considered as a function on a flat torus, and any shift of this function on a torus is also a weak solution.

  Constant functions lie in ${\cal W}_P$, so the results for constant solutions can be used as proven in subsection \ref{ss:constant}. The equivalence of $\sqrt{Q[u]}$ and standard norm in $W_2^2(\Omega)$ can be proven essentially by the argument in proof of Lemma \ref{equivalent_norm_lemma}. The ridge-Nehari solutions in ${\cal W}_P$ are then obtained and their properties are explored with exactly the same arguments as in ${\cal W}$. Specifically, we prove that the ridge-Nehari solutions in $\Omega_R$ are irreducible to functions of less variables (effectively depend on all variables) if $R$ is large enough. Similar to Lemma \ref{lem:reflection_extension}, we prove that periodic extension of such solutions are generalized solutions to \eqref{Swift--Hohenberg-equation} in whole ${\mathbb R}^2$ and, by regularity argument, are classical solutions in the plane. 
  

  For two pairs of generating vectors $(h_1, h_2)$ and $(h_1', h_2')$, denote by $M$ the corresponding transition matrix, that is $(h_1', h_2') = (h_1, h_2) M$. If both $M$ and $M^{-1}$ have integer entries 
  then lattices generated by $(h_1, h_2)$ and $(h_1', h_2')$ coincide. Then any weak solution of \eqref{Swift--Hohenberg-equation} -- \eqref{periodic_boundary_conditions} in $\Omega$, extended periodically to the whole plane and then restricted to $\Omega'$, is a weak solution of \eqref{Swift--Hohenberg-equation} -- \eqref{periodic_boundary_conditions} in $\Omega'$. Moreover, all integrals over $\Omega$ appearing in energy functional $E[u]$ coincide with corresponding integrals over $\Omega'$, so any ridge-Nehari solution in $\Omega$, after extension and restriction to $\Omega'$, generates a ridge-Nehari solution in $\Omega'$, and vice versa.

  Now we give a sufficient condition which guarantees that solutions in two parallelograms generate essentially different entire solutions.

  \begin{theorem}
  \label{irrat}
  Let the transition matrix $M$ have the following property: for arbitrary orthogonal matrix $V$, at least one entry of the matrix $MV$ is irrational. Then entire solutions generated by the ridge-Nehari solutions in parallelograms $R\Omega$ and $R\Omega'$ with $R$ large enough are essentially different, that is, cannot be obtained one from another by rotations and shifts.   
  \end{theorem}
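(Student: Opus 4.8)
The plan is to argue by contradiction through the period lattices. Suppose that for some large $R$ the entire solution $\mathbf u$ obtained by periodic extension of the ridge-Nehari solution in $R\Omega$, and the entire solution $\mathbf u'$ obtained likewise from $R\Omega'$, are \emph{not} essentially different: $\mathbf u'(x)=\mathbf u(Vx+b)$ for some orthogonal $V$ (possibly orientation-reversing) and some $b\in\R^2$. Let $\Lambda,\Lambda'\subset\R^2$ be the lattices generated by $(h_1,h_2)$ and $(h_1',h_2')$, so that $\mathbf u$ is $R\Lambda$-periodic and $\mathbf u'$ is $R\Lambda'$-periodic. From $\mathbf u'=\mathbf u(V\,\cdot+b)$ one sees at once that $Vw$ is a period of $\mathbf u$ whenever $w$ is a period of $\mathbf u'$; hence $\mathbf u$ is invariant under every translation from the subgroup $G:=R\Lambda+VR\Lambda'$ of $\R^2$.

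The next step is to pin down the full group $\mathrm{Per}(\mathbf u)$ of periods of $\mathbf u$. Being a closed subgroup of $\R^2$ containing the full-rank lattice $R\Lambda$, it is either discrete or contains a line; in the latter case $\mathbf u$ is constant along a rational direction of $R\Lambda$, hence — after the integral change of coordinates carrying a basis of $\Lambda$ to the coordinate axes — reduces to a function of fewer variables. For $R$ large this is excluded by the irreducibility of ridge-Nehari solutions (the argument of Subsection~\ref{ss:irred}, which as noted earlier in this section carries over to the periodic setting; the same argument also rules out $\mathbf u\equiv\mathrm{const}$). Thus $\mathrm{Per}(\mathbf u)$ is a full-rank lattice, and since $R\Lambda\subseteq G\subseteq\mathrm{Per}(\mathbf u)$, the group $G$ is a full-rank lattice as well. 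Equivalently, $\Lambda+V\Lambda'$ is a lattice, i.e.\ $R\Lambda$ and $VR\Lambda'$ are commensurable.

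It remains to convert this commensurability into a contradiction. Let $B,B'$ be the matrices whose columns are $h_1,h_2$ and $h_1',h_2'$, so that $\Lambda=B\mathbb Z^2$, $\Lambda'=B'\mathbb Z^2$ and $B'=BM$. Commensurability of $B\mathbb Z^2$ and $VB'\mathbb Z^2$ is equivalent to $\mathbb Z^2+(B^{-1}VB')\mathbb Z^2$ being a lattice, i.e.\ to all entries of $B^{-1}VB'$ — equivalently, of $MV$ in the frame in which the hypothesis is phrased — being rational. Here one uses the elementary Kronecker fact that for a real $2\times2$ matrix $N$ the group $\mathbb Z^2+N\mathbb Z^2$ is discrete if and only if every entry of $N$ is rational: if some $N_{ij}$ is irrational, the $i$-th coordinates of $\mathbb Z^2+N\mathbb Z^2$ already fill the dense set $\mathbb Z+N_{ij}\mathbb Z$. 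This contradicts the hypothesis that $MV$ has an irrational entry for every orthogonal $V$, and the theorem follows. The only genuinely delicate point is the second step, i.e.\ ruling out a non-discrete $\mathrm{Per}(\mathbf u)$, which rests on the irreducibility of ridge-Nehari solutions for large $R$; the matrix bookkeeping relating $M$, $V$ and the chosen generating vectors must be done with care but presents no real difficulty. Of course $R$ must also be taken large enough for the ridge-Nehari solutions in $R\Omega$ and $R\Omega'$ to exist at all — which requires $\beta$ in the range of Theorem~\ref{Nehari_solution_exists_theorem} and Remark~\ref{b-bounded} — and to be irreducible.
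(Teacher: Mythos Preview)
Your argument follows the same line as the paper's: assume the two entire solutions coincide up to an isometry, deduce that one of them is invariant under a group of translations strictly larger than its defining lattice, and invoke irreducibility of the ridge--Nehari solution for large $R$ to obtain a contradiction. You package this via the classification of closed subgroups of $\R^2$ containing a full-rank lattice, whereas the paper first reduces to shifts only and then argues directly with density of the orbit $(a,ab)\mathbb Z$ on the torus $\R^2/\Lambda$; the content is the same, and your formulation is arguably tidier.

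One step deserves more care than you give it. You end with the condition that $B^{-1}VB'$ has all rational entries and claim this is ``equivalently $MV$ in the frame in which the hypothesis is phrased''. But $B^{-1}VB'=(B^{-1}VB)M$, and the conjugate $B^{-1}VB$ of an orthogonal matrix $V$ is itself orthogonal only when $h_1,h_2$ form an orthonormal pair (i.e.\ $B$ is a scalar multiple of an orthogonal matrix). For a general parallelogram the passage from ``$(B^{-1}VB)M$ rational for some orthogonal $V$'' to the negation of the stated hypothesis ``$MW$ irrational for every orthogonal $W$'' is not automatic. The paper's own reduction sentence (``the claim is equivalent to\ldots'') is equally terse at precisely this point; in effect the clean outcome of both arguments is that the \emph{new} transition matrix $\tilde M=B^{-1}VBM$ must be rational, and the hypothesis should be read (or reformulated) accordingly. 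So the strategy is correct, but ``presents no real difficulty'' undersells what is needed to align the derived condition with the hypothesis as written.
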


  \begin{proof}
   The claim is equivalent to following: if transition matrix $M$ has at least one irrational entry, then entire solutions generated by the ridge-Nehari solutions in parallelograms $R\Omega$ and $R\Omega'$ with $R$ large enough cannot be obtained one from another by shifts.
   
   Assume without loss of generality that $h_1' = a (h_1 + b h_2)$, with $a \not \in \mathbb Q$. Consider the image of sublattice $H = (a, ab) \mathbb Z$ on a torus $[0,1] \times [0,1]$ with opposite sides identified.  The projection of $H$ to the first coordinate is a dense set. If $b$ is also irrational, $H$ is dense in the whole torus, and then $u$ is a constant. If $b$ is rational, $H$ is a dense subset of a certain straight line on the torus, and $u$ is constant in a certain direction. Both outcomes are not possible for the ridge-Nehari solutions if $R$ is large enough.     
  \end{proof}

  In the space of dimension $3\le n\le 7$, given any parallelepiped $\Omega$, one can construct a corresponding skew-periodic entire solution, and a natural analog of Theorem \ref{irrat} holds true.

  \section{Appendix}
   
   \subsection{Proof of Lemma \ref{equivalent_norm_lemma}}
    \label{S:proof_equivalent_norm_lemma}
   
    Let $v_j$ be a family of eigenfunctions of $-\Delta$ with Neumann boundary condition orthonormal in $L_2(\Omega)$. Let $\mu_j$ be corresponding eigenvalues. Obviously,
    \begin{equation}
    \label{int-v^2}
     \int\limits_\Omega (D v_i, D v_j) \,dx = \int\limits_\Omega  v_i(- \Delta v_j) \,dx = \mu_j \delta_{ij}.
    \end{equation}
    Since $\lbrace v_j \rbrace$ is a complete orthonormal system, any function $u \in {\cal W}(\Omega)$ admits a decomposition
    \begin{equation}
     u = c_j v_j, \quad\mbox{where}\quad c_j = \int\limits_\Omega u v_j \,dx.
    \end{equation}
    Then
    \begin{equation}
     \int\limits_\Omega u^2 = \sum_j c_j^2 \quad\mbox{and}\quad \int\limits_\Omega |Du|^2 = \sum_j \mu_j c_j^2.
    \end{equation}
    
    To derive a similar decomposition for $\int\limits_\Omega |D^2 u|^2$, we prove the following lemma which is of independent interest.

\begin{lemma}
Let $\Omega$ be a convex polyhedron. Then $v_i \in W_2^2 (\Omega)$, and 
    \begin{equation}
    \label{int-D2v^2}
        \int\limits_\Omega (D^2 v_i, D^2 v_j) \,dx=\mu_j^2\delta_{ij}.
    \end{equation}
\end{lemma}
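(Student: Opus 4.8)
The plan is to prove that the Neumann eigenfunctions $v_i$ of a convex polyhedron have $W_2^2$-regularity and that their second-order derivative matrices are $L_2$-orthogonal with the stated Parseval-type identity. The first ingredient is the classical $W_2^2$-regularity up to the boundary for the Neumann problem $-\Delta v_i = \mu_i v_i$ in a convex domain; this is a well-known result (Grisvard), and since $\Omega$ is a convex polyhedron in particular it is convex, so $v_i \in W_2^2(\Omega)$. Hence each $D^2 v_i$ makes sense as an $L_2(\Omega)$ matrix field, and all the integrals below are finite.

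The heart of the matter is the integration-by-parts identity
\begin{equation}
\int\limits_\Omega (D^2 v_i, D^2 v_j)\,dx = \int\limits_\Omega \Delta v_i\,\Delta v_j\,dx,
\end{equation}
valid for $v_i,v_j\in W_2^2(\Omega)$ satisfying the Neumann condition $\frac{\partial v_i}{\partial \bf n}=0$ on $\partial\Omega$. Once this is established, the rest is immediate: $\int_\Omega \Delta v_i\,\Delta v_j\,dx = \mu_i\mu_j \int_\Omega v_i v_j\,dx = \mu_i\mu_j\,\delta_{ij} = \mu_j^2\,\delta_{ij}$, using $-\Delta v_k=\mu_k v_k$ and the $L_2$-orthonormality \eqref{int-v^2}. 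So I would first prove the identity for smooth functions on a smooth convex domain by a double integration by parts, where the boundary terms involve the second fundamental form of $\partial\Omega$ (a Reilly-type / Bochner identity): schematically $\int_\Omega (|D^2 v|^2 - (\Delta v)^2) = \int_{\partial\Omega} [\,(\text{tangential gradient terms})\cdot\frac{\partial v}{\partial \bf n} + \text{II}(\nabla_T v,\nabla_T v)\,]$, so that when $\frac{\partial v}{\partial \bf n}=0$ only the second-fundamental-form term survives, and convexity makes it have the right sign — but in fact for our purposes we need the identity, not an inequality. For a polyhedron the faces are flat, so $\text{II}\equiv 0$ on the interior of each face, and the boundary integrand vanishes identically wherever it is defined; the lower-dimensional edges and vertices form a set of zero $(n-1)$-dimensional measure and contribute nothing. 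This gives the identity on a polyhedron for functions smooth up to the (flat parts of the) boundary.

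The main obstacle is the passage from smooth functions to general $v_i\in W_2^2(\Omega)$ with the Neumann condition, since a convex polyhedron is only a Lipschitz domain and smooth functions satisfying the homogeneous Neumann condition need not be dense in the relevant subspace of $W_2^2(\Omega)$; moreover the identity mixes the bilinear form $\int (D^2 v_i, D^2 v_j)$ with $\int \Delta v_i\,\Delta v_j$, and one must be careful that the boundary integration by parts is legitimate at this regularity. The cleanest route I would take is to exploit the polyhedral structure directly: near the interior of a face $F$, flatten and reflect $v_i$ evenly across $F$ (legitimate precisely because $\frac{\partial v_i}{\partial \bf n}=0$ on $F$), obtaining a $W_2^2$ function on a neighborhood that is an honest domain with no boundary there, and apply the standard integration-by-parts formula $\int (D^2 f, D^2 g) = \int \Delta f\,\Delta g$ valid for $W_2^2$ functions on $\mathbb R^n$ (or on domains with $C^{1,1}$ boundary) localized by a cutoff supported away from the edges. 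Summing a partition of unity subordinate to (i) interior patches and (ii) patches around face-interiors, and noting that the edge-and-vertex skeleton has measure zero and carries no distributional mass for $W_2^2$ functions, assembles the global identity \eqref{int-D2v^2}. I would also double-check the cross terms $i\ne j$ are handled by polarization, which is automatic since everything is bilinear.
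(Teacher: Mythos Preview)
Your overall strategy is sound and close in spirit to the paper's, but the localization step (``summing a partition of unity'') hides the real difficulty and, as written, does not go through. The identity $\int (D^2 f, D^2 g)=\int \Delta f\,\Delta g$ is not stable under multiplication by a cutoff: if you insert $\chi_k$ and integrate by parts, you produce error terms involving $D\chi_k$ paired with $D v_i\cdot D^2 v_j$ and with $v_i\cdot D v_j$. These error terms live in an $\varepsilon$-neighborhood of the edge skeleton and carry a factor $|D\chi_k|\sim \varepsilon^{-1}$. Knowing only $v_i\in W_2^2(\Omega)$ gives no quantitative decay of $\|D v_i\|_{L_2}$ or $\|D^2 v_j\|_{L_2}$ on that thin set, so the product $\varepsilon^{-1}\cdot o(1)$ is not controlled. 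The sentence ``the edge-and-vertex skeleton has measure zero and carries no distributional mass for $W_2^2$ functions'' does not address this: the edges have zero $(n-1)$-measure, but the term you must kill is an $n$-dimensional volume integral with a blowing-up weight, not a boundary integral supported on the edges.

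The paper confronts exactly this point. It integrates by parts not on $\Omega$ but on an approximating convex domain $\Omega_\varepsilon\subset\Omega$ whose boundary consists of the original flat faces (where, as you note, the Neumann condition and flatness kill the boundary integrand) together with a curved cap $\Gamma_\varepsilon$ near the edges. The whole proof then reduces to showing that the surface integral over $\Gamma_\varepsilon$ tends to zero, and this is where the nontrivial input enters: the paper invokes the pointwise estimates $|Dv_i|\le C$ (Maz'ya) and $|D^2 v_i|\le C\varepsilon^{\lambda-2}$ with $\lambda>1$ (Escobar) on $\Gamma_\varepsilon$, combined with $|\Gamma_\varepsilon|=O(\varepsilon)$. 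In other words, Grisvard-type $W_2^2$ regularity alone is not enough; one genuinely needs quantitative control of $Dv_i$ and $D^2 v_i$ near the singular set, and your proposal supplies none. If you want to rescue the cutoff approach, you will end up needing precisely these (or equivalent) edge estimates to show the $D\chi_\varepsilon$-terms vanish in the limit.
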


\begin{proof}
It is well known that the the functions $v_i$ are smooth outside of a neighborhood of the edges and vertices of $\Omega$. We introduce a smoothed-out convex domain $\Omega_\varepsilon\subset\Omega$ such that the boundary $\partial\Omega_\varepsilon$ can be split into two parts: $\Gamma_\varepsilon$ lies in $\varepsilon$-neighborhood of the edges and vertices, whereas $\partial\Omega_\varepsilon\setminus\Gamma_\varepsilon\subset\partial\Omega$, see Fig.~\ref{fig:1}.


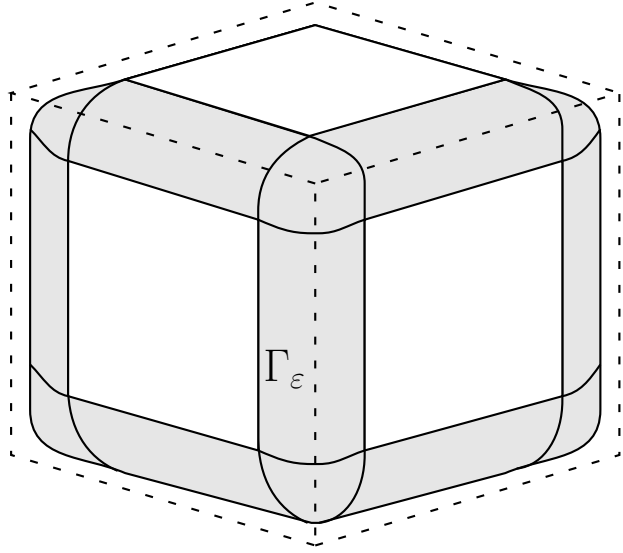
\begin{figure}[h]
    \centering
\begin{tikzpicture}[xscale=0.5,yscale=0.6]

\filldraw [fill=black!10, thick]
(0,11.5) to (-5,10.3) to [out=196,in=90] (-7.5,9) to [out=270,in=90] (-7.5,3) to [out=-90,in=160] (-5,1.6) to (-0.1,0.5) to (0,0.5) to (0.1,0.5) to (5,1.6) to [out=20, in=-90] (7.5,3) to (7.5,9) to [out=90, in=-15] (5,10.3) to (0,11.5) ;

\draw [thick] (0,11.5) to (-5,10.3) to [out=196,in=90] (-6.5,8.5) to [out=270,in=90] (-6.5,3.5) to [out=-90,in=160] (-5.2,1.65);

\draw [thick, xshift=5cm,yshift=-1.1cm] (0,11.4) to (-5,10.2) to [out=200,in=90] (-6.5,8.5) to [out=270,in=90] (-6.5,3.5) to [out=-90,in=160] (-5.2,1.6);

\draw [thick]
(-5,10.3) to (0,9) to [out=-20, in=90] (1.3,8) to [out=270,in=90] (1.3,2) to [out=-90,in=5] (0.1,0.5);

\draw [thick, xshift=5.2cm,yshift=1.2cm]
(-0.2,9.1) to [out=-20, in=90] (1.3,8) to [out=270,in=90] (1.3,2) to [out=-90,in=5] (0.1,0.5);

\draw [thick] (-7.5,4) to [out=-40, in=170] (-6.5,3.3);
\draw [thick] (-1.5,2.1) to [out=-20,in=180] (-0.1,1.8) [out=0,in=180] to (0.1,1.8) to [out=0, in=195] (1.3,2.1);
\draw [thick] (6.5,3.3) to [out=10,in=-130] (7.5,4);

\begin{scope}[yshift=5.2cm]
\draw [thick] (-7.5,4) to [out=-40, in=170] (-6.5,3.3);
\draw [thick] (-1.5,2) to [out=-20,in=180] (-0.1,1.7) [out=0,in=180] to (0.1,1.7) to [out=0, in=195] (1.3,2);
\draw [thick] (6.5,3.3) to [out=10,in=-130] (7.5,4);
\end{scope}

\filldraw[fill=white, draw=black, thick] (0,11.5) -- (-5,10.3) -- (-0.15,9.05) -- (5,10.3) -- cycle;

\filldraw[fill=white, draw=black, thick] (-6.5,8.5) -- (-6.5,3.3) -- (-1.5,2.1) -- (-1.5,7.2) -- cycle;

\filldraw[fill=white, draw=black, thick] (6.5,8.5) -- (6.5,3.3) -- (1.3,2.1) -- (1.3,7.2) -- cycle;

\draw[thick,loosely dashed] (0,0)--(8,2)--(8,10)--(0,8)--cycle;
\draw[thick,loosely dashed] (0,8)--(-8,10)--(0,12)--(8,10);
\draw[thick,loosely dashed] (-8,10)--(-8,2)--(0,0);

\draw (-0.8,3.9) node[] {\Large$\Gamma_\varepsilon$};

\end{tikzpicture}

\caption{The domain $\Omega_\varepsilon$ for a cube $\Omega$ in $\mathbb R^3$}
    \label{fig:1}
\end{figure}

Consider the integral
  \begin{equation}
   \int\limits_{\Omega_\varepsilon} (D^2 v_i, D^2 v_j)\,\,dx = \int\limits_{\Omega_\varepsilon}\sum\limits_{kl}D_kD_lv_iD_kD_lv_j\,dx,
  \end{equation}
Integrating by parts, we obtain
    \begin{equation}
    \aligned
\int\limits_{\Omega_\varepsilon} (D^2 v_i, D^2 v_j)\,\,dx = - \int\limits_{\Omega_\varepsilon}\sum\limits_{kl}D_kv_iD_kD_lD_lv_j\,dx \ + & \int\limits_{\partial\Omega_\varepsilon\setminus\Gamma_\varepsilon} \sum\limits_{kl}D_kv_iD_kD_lv_j{\bf n}_l\, dS_x\\
+ & \int\limits_{\Gamma_\varepsilon} \sum\limits_{kl}D_kv_iD_kD_lv_j{\bf n}_l\, dS_x=: I_1+I_2+I_3.
    \endaligned
    \end{equation}

By \eqref{int-v^2}, we have, as $\varepsilon\to0$,
$$
I_1=\int\limits_{\Omega_\varepsilon} (Dv_i, D(-\Delta v_j)) \,dx = \mu_j \int\limits_{\Omega_\varepsilon} (Dv_i, Dv_j) \,dx \to \mu_j^2 \delta_{ij}.
$$    

Next, we observe that $I_2$ is a sum of integrals over flat faces, where ${\bf n}$ is constant. Therefore,
    \begin{equation}
     I_2=\int\limits_{\partial\Omega_\varepsilon\setminus\Gamma_\varepsilon} \sum\limits_k D_kv_iD_k\frac {\partial v_j}{\partial\bf n} \, dS_x=\int\limits_{\partial\Omega_\varepsilon\setminus\Gamma_\varepsilon} \Big[\sum\limits_k\partial_kv_i\partial_k\frac {\partial v_j}{\partial\bf n} + \frac {\partial v_i}{\partial\bf n} \frac {\partial^2 v_j}{\partial{\bf n}^2}\Big] \, dS_x,
    \end{equation}
where $\partial_k=D_k-{\bf n}_k\frac {\partial}{\partial{\bf n}}$ is the tangential differential operator. Due to the Neumann boundary conditions, we have $I_2=0$.

Dealing with $I_3$, we use the estimates in \cite[Theorem 4.3]{Escobar} and \cite{Mazya}. They imply that
\begin{equation}
    |Dv_i|\le C,\quad |D^2v_i|\le C\varepsilon^{\lambda-2} \quad \mbox{on} \ \ \Gamma_\varepsilon, 
\end{equation}
where $\lambda>1$ depends only on $\Omega$, and $C$ depends on $v_i$ but does not depend on $\varepsilon$. Since $|\Gamma_\varepsilon|=O(\varepsilon)$, we obtain $I_3\to0$ as $\varepsilon\to0$. This completes the proof.  \end{proof}    

\begin{remark}
    Formula \eqref{int-D2v^2} holds also for eigenfunctions and eigenvalues of the Dirichlet Laplacian in a convex polyhedron. The proof runs without essential changes.
\end{remark}

We continue the proof of Lemma \ref{equivalent_norm_lemma}. Formula \eqref{int-D2v^2} immediately gives for $u \in {\cal W}(\Omega)$
    \begin{equation}
     \int\limits_\Omega (D^2 u, D^2 u) \,\,dx = \sum_{ij} c_i c_j \int\limits_\Omega (D^2 v_i, D^2 v_j) \,dx = \sum_j \mu_j^2 c_j^2. 
    \end{equation}
    Thus
    \begin{equation}
     \|u\|_{W_2^2}^2=
     \int\limits_\Omega \left [ |D^2u|^2 + |Du|^2 + u^2 \right ]
     =\sum_j (\mu_j^2 + \mu_j +1 )\, c_j^2.
    \end{equation}
    In a similar way,
    \begin{equation}
     Q[u] = \sum_j ((\mu_j-1)^2 - \alpha)\, c_j^2.
    \end{equation}

    Observe that the ratio
    \begin{equation}
     \frac {(\mu-1)^2 - \alpha}{\mu^2 + \mu +1 }
    \end{equation}
    tends to $1$ as $|\mu| \to \infty$ and is continuous and strictly positive (recall that $\alpha<0$). By compactness argument, we have
    \begin{equation}
     c (\mu^2 + \mu +1) \le (\mu-1)^2 - \alpha \le C(\mu^2 + \mu +1),
    \end{equation}
    and the Lemma follows.
\hfill$\square$

  \subsection{Proof of Lemma \ref{uniformization_lemma}}
   \label{S:proof_uniformization_lemma}
   We use the notations $Q$, $B$ and $D$ introduced in \eqref{QBD}.

   To prove the first claim, we recall that by \eqref{monot} $\varphi_v(t)$ is monotonous if and only if $B^2 - 4QD<0$. Observing that $I[v] = \frac {B}{\beta \sqrt{QD}}$, we obtain that
    \begin{equation}
     I^2[v] = \frac {B^2} {\beta^2 QD} = \frac {B^2 - 4QD} {\beta^2 QD} + \frac 4 {\beta^2} < \frac 4 {\beta^2}.
    \end{equation}
    The non-monotonous and degenerate cases are considered similarly. This also proves the second claim.

For $\beta > 2 I[v]^{-1}$, the equation $L[tv] = 0$ has two positive roots:
   \begin{equation}
    t = \frac {B \pm \sqrt{B^2 - 4 Q D}}{2 D} = \frac{\sqrt{Q}}{\sqrt{D}} \cdot (s \pm \sqrt{s^2 - 1}),
   \end{equation}
   (here $s=\frac {\beta I[v]} 2>1$), and the ``ridge'' root $\tilde t$ corresponds to the ``minus'' sign.

   Now we calculate the energy:
   \begin{equation}
    E[\tilde t v] = E[\tilde t v] - \frac 1 4 L[\tilde t v] = \frac 1 {12}\, \tilde t\,^2 (3Q - B\tilde t) = \frac 1{12D}\, (B\tilde t-Q)(3Q - B\tilde t).
   \end{equation}
Since 
   \begin{equation}
    \frac BQ\,\tilde t = 2s(s - \sqrt{s^2 - 1}) = \frac {2s} {s + \sqrt{s^2 - 1}},
   \end{equation}
we have
   \begin{equation}
   \aligned
   E[\tilde t v] = \frac {Q^2}{12D} \Big ( \frac {2s}{s+ \sqrt{s^2 - 1}} - 1 \Big) \Big ( 3 - \frac {2s} {s + \sqrt{s^2 - 1}} \Big ) 
    = & \ \frac {Q^2}{12D}\cdot \frac {(s- \sqrt{s^2 - 1})(s+ 3\sqrt{s^2 - 1})}{(s+ \sqrt{s^2 - 1})^2}\\
   = &\  \frac {Q^2}{12D}\cdot \frac {s+ 3\sqrt{s^2 - 1}}{(s+ \sqrt{s^2 - 1})^3} = \frac {Q^2}{12Ds^2}\cdot f(s).
   \endaligned
   \end{equation}
Since
   \begin{equation}
    \frac {Q^2}{12Ds^2} = \frac {Q^3}{3B^2} = \frac {(Q[v])^3}{3 \beta^2 \Big( \int\limits_\Omega v^3 \,dx \Big)^2},
  \end{equation}
the third claim follows. The last claim is proved by elementary calculus.
\hfill$\square$


\small


\begin{thebibliography}{99}
\bibitem{ALBKS} D. Avitabile, D.J.B. Lloyd, J. Burke, E. Knobloch, B. Sandstede, To Snake or Not to Snake in the Planar Swift--Hohenberg Equation, SIAM J. Appl. Dyn. Syst., v.9 (2010), No.3, 704--733.

\bibitem{BIS} B. Braaksma, G. Iooss, L. Stolovitch, Proof of quasipatterns solutions of the Swift--Hohenberg equation, Commun. Math. Phys. v.353 (2017), 37--67.

\bibitem{BGL} L.A. Belyakov, L.Yu. Glebsky, L.M. Lerman, Abundance of stable stationary localized solutions to the generalized 1D Swift--Hohenberg equation, Computers \& Math. Appl., v.34 (1997), 253--266.

\bibitem{BK} J. Burke and E. Knobloch, Localized states in the generalized Swift--Hohenberg equation, Phys. Rev. E, v.73 (2006), 056211.

\bibitem{Champ} A. Champneys, Codimension-one persistence beyond all orders of homoclinic orbits to singular saddle centres in reversible systems, Nonlinearity, v.14 (2001), No.1,  87--112.

\bibitem{Rab3} V. Coti Zelati, P.H. Rabinowitz, Homoclinic Type Solutions for a Semilinear Elliptic PDE on $\R^n$, Comm. Pure Appl. Math., v.XLV (1992), 1217--1269.

\bibitem{CG} M. Cross, H. Greenside, Pattern Formation and Dynamics in Non-equilibrium Systems, Cambridge University Press, Cambridge, UK, 2009.

\bibitem{Doel} A. Doelman, B. Sanstede, A. Scheel, G. Schneider, Propagation of hexagonal patterns near onset, Eur. J. Appl. Math., v.14 (2003), 85--110.

\bibitem{deWit} A. De Wit, G. Dewel, R Borckmans, D. Walgraef, Three-dimensional dissipative structures
in reaction-diffusion systems, Physica D, v.61 (1992), 289--296.

\bibitem{Escobar} J.F. Escobar, Uniqueness Theorems on Conformal Deformation of Metrics, Sobolev inequalities and an Eigenvalue Estimate. Comm. Pure Appl. Math., v.XLIII (1990), 857--883.

\bibitem{Fife} P.C. Fife, Pattern Formation in Gradient Systems, in: Handbook of Dynamical Systems, v.2, Edited by B. Fiedler, Elsevier Science B.V., 2002, 677--722. 

\bibitem{FK} S. Fu\v{c}ik, A. Kufner, Nonlinear Differential Equations, Elsevier, 1980.

\bibitem{GL} L.Yu. Glebsky, L.M. Lerman, On small stationary localized solutions for the generalized 1-D Swift--Hohenberg equation, Chaos: Interdisc. J. Nonlin. Sci., v.5 (1995), No.2, 424--431.

\bibitem{Hoyle} R.B. Hoyle. Pattern Formation: An Introduction to Methods, Cambridge University Press, Cambridge, UK, 2006.

\bibitem{IP}  G. Iooss, M.C. P\'erou\'eme, Perturbed Homoclinic Solutions in Reversible 1:1 Resonance Vector Fields, J. Diff. Equat., v.102 (1993), No.1, 62--88.

\bibitem{IR} G. Iooss, A.M. Rucklidge,  On the existence of quasipattern solutions of the Swift--Hohenberg equation, J. Nonlinear Sci. v.20 (2010), 361--394.

\bibitem{Iooss} G. Iooss, Existence of quasipatterns in the superposition of two hexagonal patterns, Nonlinearity, v.32 (2019), 3163--3187.

\bibitem{Knobloch} E. Knobloch, Spatially localized structures in dissipative systems: Open problems, Nonlinearity, v.21 (2008), T45--T60.

\bibitem{Knob} E. Knobloch, Spatial Localization in Dissipative Systems, Ann. Rev. Condens. Matter Phys., v.6 (2015), 325--359.

\bibitem{KL} N.E. Kulagin, L.M. Lerman, On periodically modulated rolls in the generalized Swift-Hohenberg equation: Galerkin' approximations, Physica D, v.454 (2023), 133845.

\bibitem{KLSh} N.E. Kulagin, L.M. Lerman, T. Shmakova, On Radial Solutions to the Swift--Hohenberg Equation, Proc. Steklov Inst. Math., v.261 (2008), 183--203.

\bibitem{VM} J. Lega, J.V. Moloney, A. Newell. Swift--Hohenberg equation for lasers, Phys. Rev. Lett., v.73 (1994), 2978--2981.

\bibitem{heter} L.M. Lerman, Homo- and heteroclinic orbits, hyperbolic subsets in a one-parameter unfolding of a Hamiltonian system with two saddle-foci, Regul. Chaotic. Dyn., v.2 (1997), No.3, 139--155.

\bibitem{LNN} L.M. Lerman, P.E. Naryshkin, A.I. Nazarov, Abundance of entire solutions to nonlinear elliptic equations by the variational method, Nonlinear Analysis -- TMA, v.190 (2020), DOI 10.1016/j.na.2019.111590, 1--21. 

\bibitem{Sand} D.J.B. Lloyd, B. Sandstede, Localized radial solutions of the Swift--Hohenberg equation, Nonlinearity, v.22 (2009), No.2, 485--524.

\bibitem{Mazya} V.G. Maz'ya, On the boundedness of first derivatives for solutions to the Neumann--Laplace problem in a convex domain, Problemy Matematicheskogo Analiza, No. 40, May 2009; Journal of Mathematical Sciences, v.159 (2009), 104--112.

\bibitem{spots} S.G. McCalla, B. Sandstede, Spots in the Swift--Hohenberg Equation, SIAM J. Appl. Dynam. Syst., v.12, No.2, 831--877.

\bibitem{Meer} J.-C. van der Meer. Hamiltonian Hopf bifurcation, Lect. Notes in Math., v.1160 (1985), Springer-Verlag.

\bibitem{Meron} E. Meron, Pattern-formation approach to modelling spatially extended ecosystems, Ecol. Model., v.234 (2012), 70--82.

\bibitem{Rab1} P. Montecchiari and P. H. Rabinowitz, On the existence of multi-transition solutions for a class of elliptic systems, Ann. Inst. H. Poincar\'e Anal. Non Lin\'eaire, v.33 (2016), 199--219.

\bibitem{NSch} A.I. Nazarov, A.P. Shcheglova, Solutions with various structures for semilinear equations in $\mathbb R^n$  driven by fractional Laplacian, Calc. Var. and PDEs, v.62 (2023), No.4, paper N112, 1--31. 

\bibitem{Nep} Len Pismen, Patterns and Interfaces
in Dissipative Dynamics, Second Edition, Springer Series in Synergetics, 2023.

\bibitem{Rabinowitz} P.H. Rabinowitz, Minimax Methods in Critical Point Theory with Applications to the Differential Equations, CBMS Regional Conference Series in Mathematics, v.65, Amer. Math. Soc., Providence, R.I., 1986.

\bibitem{Rab2} P.H. Rabinowitz, On a class of reversible elliptic systems, Networks and Heterogeneous Media, v.7 (2012), 927--939.

\bibitem{Schmidt} D.S. Schmidt, Periodic solutions near a resonant equilibrium of a Hamiltonian system, Celestial Mechanics, v.9 (1974), 81--103.

\bibitem{Sere1} E. Ser\'e, Existence of infinitely many homoclinic orbits in Hamiltonian systems, Math. Z., v.209  (1992), 27--42.

\bibitem{Sere2} E. Ser\'e, Looking for the Bernoulli shift, Ann. Inst. H. Poincar\'e.: Anal. Non Lin\'eaire, v.10 (1993), 561--590.

\bibitem{Stalun} K. Staliunas, Three-Dimensional Turing Structures and Spatial Solitons in Optical Parametric Oscillators, Phys. Rev. Lett., v.81 (1998), No.1, 81--84. 

\bibitem{stein} E.M. Stein. Singular integrals and differentiability properties of functions, Princeton University Press, Princeton, NJ, 1970. xiv + 287 pp.

\bibitem{SH} J. Swift, P.C. Hohenberg, Hydrodynamic fluctuations at the convective instability, Phys. Rev. A, v.15 (1977), 319--328.

\bibitem{TGM} M. Tlidi, M. Georgiou, P. Mandel, Transverse patterns in nanscent optical bistability, Phys. Rev. A, v.48 (1993), No.5, 4605--4609.

\bibitem{Tlidi} M. Tlidi, Three-dimensional crystals and localized structures in diffractive and
dispersive nonlinear ring cavities, J. Opt. B: Quantum Semiclass. Opt. v.2 (2000), 438–-442. 







  













 \end{thebibliography}
\end{document}